\newtheorem{thm}{Theorem}[section]
\newtheorem{prop}[thm]{Proposition}
\newtheorem{cor}[thm]{Corollary}
\newtheorem{defi}[thm]{Definition}
\newtheorem{lm}[thm]{Lemma}
\newtheorem{conj}[thm]{Conjecture}
\newtheorem*{thmnum*}{Théorème}
\newtheorem*{propnum*}{Proposition}
\newtheorem*{cornum*}{Corollaire}
\newtheorem*{definum*}{Définition}
\newtheorem*{lmnum*}{Lemme}
\newtheorem*{conjnum*}{Conjecture}
\newtheorem*{exnum*}{Exemple}
\newtheoremstyle{rmk}{\topsep}{\topsep}{\upshape}{0pt}{\bfseries}{ : }{ }{}
\theoremstyle{rmk}
\newcommand{\N}{\mathbb{N}}
\newcommand{\C}{\mathbb{C}}
\newcommand{\p}{\mathbb{P}}
\newcommand{\cO}{\mathcal{O}}
\author{MABED Yanis}
\title{\textbf{Totally Invariant Divisors of non Trivial Endomorphisms of the Projective Space}}
\date{}
\begin{document}

\maketitle

\vspace{1\baselineskip}

\begin{abstract}
It is expected that a totally invariant divisor of a non-isomorphic endomorphism of the complex projective space is a union of hyperplanes. In this paper, we compute an upper bound for the degree of such a divisor. As a consequence, we prove the linearity of totally invariant divisors with isolated singularities.
\end{abstract}

\vspace{1\baselineskip}

\section{Introduction}

\vspace{1\baselineskip}

Only a few complex projective varieties admit non-isomorphic endomorphisms (which we call non trivial endomorphisms). The projective space of any dimension is one of them and a subset $D\subset \p^{n}$ is said totally invariant if there exists a non trivial endomorphism $f$ of $\p^{n}$ such that $f^{-1}(D)=D$. The following statement has been conjectured in \cite{bcs}

\vspace{1\baselineskip}

\begin{conj}\label{conjbcs}
All totally invariant prime divisors of $\p^{n}$ are linear.
\end{conj}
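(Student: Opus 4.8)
The plan is to extract linearity from the interplay between the ramification formula and the fact that total invariance forces $f$ to restrict to a non-isomorphic self-map of $D$; the numerics then split into a Fano regime and a log Calabi--Yau regime, each governed by a rigidity principle. Assume $n\ge 2$ (for $n=1$ a prime divisor is a point, hence a hyperplane). Since $D$ is prime and $f^{-1}(D)=D$ set-theoretically, the scheme-theoretic pullback is supported on $D$, so $f^{*}D=mD$; comparing divisor classes with $f^{*}\cO(1)=\cO(\delta)$ gives $m=\delta$, i.e.\ $f$ is totally ramified along $D$ with index $\delta$. Feeding this into $K_{\p^{n}}=f^{*}K_{\p^{n}}+R$ yields $(\delta-1)D\le R$, and comparing degrees against $\deg R=(n+1)(\delta-1)$ gives the bound $e:=\deg D\le n+1$. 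Adding $D$ and using $f^{*}D=\delta D$ produces the logarithmic ramification formula
\[
K_{\p^{n}}+D=f^{*}(K_{\p^{n}}+D)+R',
\]
with $R'\ge 0$ supported off $D$ and $\deg R'=(n+1-e)(\delta-1)$. Finally, from $f_{*}f^{*}D=\delta^{n}D$ and $f^{*}D=\delta D$ one gets $f_{*}D=\delta^{\,n-1}D$, so the induced finite surjection $f|_{D}\colon D\to D$ has degree $\delta^{\,n-1}>1$: the divisor $D$ itself carries a non-isomorphic endomorphism.

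The next step is to run the dichotomy on $e$ using $f|_{D}$, first in the smooth case. If $2\le e\le n$ then $K_{D}=\cO_{D}(e-n-1)$ is anti-ample, so $D$ is a Fano hypersurface of Picard number one (Lefschetz); I would invoke the rigidity theorem that such a variety admitting a non-isomorphic endomorphism is a projective space, which is impossible for a hypersurface of degree $\ge 2$, or equivalently cite the non-existence of non-trivial endomorphisms of smooth hypersurfaces of dimension $\ge 2$ and degree $\ge 3$. If $e=n+1$ then $K_{\p^{n}}+D\sim 0$, the formula above forces $R'=0$, and $f$ is \emph{étale} on $U=\p^{n}\setminus D$ of degree $\delta^{n}>1$; multiplicativity of the topological Euler characteristic gives $\chi(U)=0$, hence $\chi(D)=n+1$, which contradicts the actual Euler number of a degree-$(n+1)$ hypersurface (e.g.\ $\chi=0$ for a plane cubic, $24$ for a quartic surface, and so on). The residual low cases where $D$ is itself a homogeneous or toric Fano of higher Picard number — most notably a smooth quadric surface in $\p^{3}$ — escape the Picard-one rigidity and must be excluded by an ad hoc analysis of the ambient total ramification, but they are finite in number and not the conceptual difficulty.

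The genuine difficulty is that $D$ need not be smooth, so neither the Fano rigidity nor the Euler-characteristic computation applies verbatim. The plan here is to pass to the normalization $\nu\colon\tilde D\to D$, through which the finite map $f|_{D}$ lifts to a non-isomorphic endomorphism $\tilde f$ of the normal projective variety $\tilde D$, and to encode the singularities by adjunction $\nu^{*}(K_{\p^{n}}+D)=K_{\tilde D}+\mathrm{Diff}$, so that $(\tilde D,\mathrm{Diff})$ becomes a log pair preserved by $\tilde f$; the degree bound and the Fano/log-CY dichotomy should then be re-derived for this pair. Simultaneously one checks that $\mathrm{Sing}(D)$ is again totally invariant and carries an induced endomorphism, which opens an induction on $n$ over the lower-dimensional invariant strata. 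The hard part will be keeping the two rigidity inputs valid for the \emph{singular, normal} variety $\tilde D$ and its boundary $\mathrm{Diff}$ — that is, proving a Picard-one rigidity for klt Fano varieties and controlling $\chi$ (or the numerical flatness of the logarithmic cotangent sheaf) in the presence of the different — which amounts to showing that no positive-dimensional non-linear invariant geometry can hide in the singular locus. I expect this singular control, rather than the numerics or the rigidity principles themselves, to be the crux; it is precisely the step trivialized once the singularities of $D$ are assumed isolated.
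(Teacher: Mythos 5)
You should first be clear about the status of this statement: in the paper it is a \emph{conjecture}, not a theorem. The paper never proves it; what it establishes are partial results in its direction --- the degree bound $d<\binom{n}{l+1}^{1/(n-l-1)}+1$ obtained from Chern-class inequalities for $\Omega^{1}_{\p^{n}}(\log X)$ restricted to the preimage $M=f^{-1}(P)$ of a general linear subspace avoiding $X_{sing}$, the theorem on cones proved by projection from a point, and the corollary that divisors with isolated singularities are hyperplanes. So your attempt has to be judged as a proof of an open problem, and it does not close it. Your preliminary reductions are correct and standard: $f^{*}D=\delta D$, the bound $e\le n+1$ from the ramification formula, $\deg(f|_{D})=\delta^{n-1}>1$, and the smooth-case dichotomy --- Fano rigidity for $2\le e\le n$ (Beauville, or Hwang--Mok for Picard number one) and the \'etale Euler-characteristic argument for $e=n+1$. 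But these recover exactly the results the paper already cites as known in its introduction (Beauville, Cerveau--Lins Neto, Hwang--Nakayama, Zhang); note in particular that the quadric surface in $\p^{3}$ \emph{does} admit non-isomorphic endomorphisms, so it cannot be excluded by any rigidity applied to $f|_{D}$ alone --- Zhang's theorem, which you relegate to ``ad hoc analysis,'' genuinely needs the ambient invariance.

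The genuine gap is the one you yourself flag: the singular case, which is the entire content of the conjecture. Your plan --- normalize, write $\nu^{*}(K_{\p^{n}}+D)=K_{\tilde D}+\mathrm{Diff}$, prove a Picard-one rigidity for klt Fano pairs carrying a non-isomorphic endomorphism, show $\mathrm{Sing}(D)$ is totally invariant and induct over invariant strata --- is a description of what would have to be proven, not a proof. Each ingredient is unproven and of essentially the same depth as the conjecture itself: rigidity of singular Fano varieties (or pairs) under non-isomorphic endomorphisms is open; the total invariance of $\mathrm{Sing}(D)$ and the behaviour of the different under the lifted endomorphism are asserted without argument; and the Euler-characteristic step has no analogue for singular $D$ without substantial extra input. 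It is worth contrasting this with how the paper gets its partial results: by working only on a general complete-intersection locus $M$ disjoint from $X_{sing}$, it never needs any statement about the singular points themselves --- the singular locus enters only through its dimension, via genericity of $M$ and the rank estimates for $df_{\log}$. That is precisely what makes the singular case partially accessible there, and precisely what your proposal replaces by appeals to unavailable rigidity theorems.
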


\vspace{1\baselineskip}

For $n\geq 3$, the conjecture is true for smooth divisors thanks to a result of Beauville \cite[Thm]{beauvilleart} for smooth divisors of degree higher than two and Cerveau-Lins Neto \cite[Thm 2]{cln} for smooth quadrics. Let $D\subset \p^{n}$ be a totally invariant divisor by an non trivial endomorphism $f$ of $\p^{n}$, the logarithmic ramification formula (see \cite[Lemma 2.5]{bh}) gives the following
\begin{equation}
K_{\p^{n}}+D=f^{*}(K_{\p^{n}}+D)+R\nonumber
\end{equation}
where $R$ is an uniquely determined effective divisor. We deduce that the degree $d$ of a totally invariant divisor is bounded by $n+1$. Hwang and Nakayama showed that irreducible hypersurfaces of $\p^{n}$ of degree $n+1$ are not totally invariant \cite[Thm 2.1.]{hn} and Höring proved that no irreducible hypersurface of $\p^{n}$ of degree $n$ is totally invariant (see \cite{horing}). This last statement finishes the proof of the conjecture in $\p^{3}$ thanks to Zhang who showed that no quadric of $\p^{3}$ is totally invariant (see \cite[Thm 1.1.]{dqzhang}). Our goal here is to prove the two following theorems.

\vspace{1\baselineskip}

\begin{thm}\label{thmA}
Let $X$ be an irreducible totally invariant divisor of $\p^{n}$ of degree $d$ and let $l$ be the dimension of the singular locus of $X$ (we set $l=-1$ if $X$ is smooth). Then we have
\begin{equation}
d<\binom{n}{l+1}^{1/(n-l-1)}+1\nonumber.
\end{equation}
\end{thm}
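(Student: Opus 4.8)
The plan is to turn total invariance into an explicit polynomial identity, and then to read off the geometry of the singular locus as an intersection-theoretic count. Lift everything to the cone: let $G$ be a homogeneous form of degree $d$ defining $X$, and write $f=(F_{0},\dots,F_{n})$ with each $F_{i}$ homogeneous of degree $q=\deg f\geq 2$. Irreducibility of $X$ together with $f^{-1}(X)=X$ forces $G\circ f=c\,G^{q}$ for a constant $c$ (comparing degrees shows the ramification index of $f$ along $X$ is exactly $q$). Differentiating gives the fundamental identity
\[
\sum_{i}(\partial_{i}G)(f(x))\,\partial_{j}F_{i}(x)=c\,q\,G(x)^{q-1}\,\partial_{j}G(x),\qquad j=0,\dots,n,
\]
that is, $J_{f}^{T}\,(\nabla G\circ f)=c\,q\,G^{q-1}\,\nabla G$. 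Restricting to $X=\{G=0\}$ yields $J_{f}^{T}(\nabla G\circ f)|_{X}=0$, which simultaneously records that $X$ lies in the ramification locus of $f$ and governs the corank of $J_{f}$ along $X$ and along its singular strata.

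The next step is to show that $f$ preserves the singular locus, $f(\mathrm{Sing}\,X)\subseteq\mathrm{Sing}\,X$, and more generally the whole flag $X\supseteq\mathrm{Sing}\,X\supseteq\mathrm{Sing}(\mathrm{Sing}\,X)\supseteq\cdots$. I would obtain this from the local normal form of the totally ramified endomorphism along the smooth part of $X$ (transversally $w\mapsto w^{q}$, étale along $X$) combined with the identity above, which forces $\nabla G\circ f$ into $\ker J_{f}^{T}$ over $X$. The payoff is that $S:=\mathrm{Sing}\,X$, of dimension $l$, is an invariant subvariety carrying a nontrivial self-map; it is constrained by the dynamics of $f$, not merely by being the singular set of a hypersurface, and this is what lets me work at a general point of $S$.

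For the core estimate I would cut by a general linear subspace $L\cong\p^{n-l}$, so that $Y=X\cap L$ is a degree-$d$ hypersurface in $\p^{n-l}$ whose singular points are precisely the $\deg S$ points of $S\cap L$, each of multiplicity $\geq 2$. The transverse codimension of $S$ inside $X$ is $n-l-1$, and I would realize $S$ (generically transversally) as the locus cut out inside $X$ by $n-l-1$ sections built from the Jacobian ideal, each contributing a factor $d-1$, whence the shape $(d-1)^{n-l-1}$. The constant $\binom{n}{l+1}$ should emerge as the governing Chern number on the relevant projective space, of the type $\int c_{l+1}(T_{\p^{n-1}})=\binom{n}{l+1}$; the passage from $\leq$ to the strict inequality, and the suppression of lower-order terms, should be forced by irreducibility of $X$ together with the invariance established above.

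The \textbf{main obstacle} is obtaining the sharp constant $\binom{n}{l+1}$ with the exact exponent $n-l-1$. A naive Bézout bound on the Jacobian ideal restricted to $X$ only produces something of order $d\,(d-1)^{n-l-1}$, which is too weak and carries the wrong constant. Closing this gap requires the finer local dynamics of $f$ near $S$ — controlling exactly how the corank of $J_{f}$ jumps along $S$ and the induced multiplicities, via the differentiated identity evaluated on $S$ — in order to replace the crude count by the Chern-class count and to upgrade the bound to a strict inequality. Verifying the Chern computation on the correct bundle and handling any non-reduced or non-transverse behaviour of $S$ is the principal technical burden; I expect the invariance of the singular flag to be the lever that makes the sharp constant attainable.
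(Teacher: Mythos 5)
Your proposal does not contain a proof of the stated inequality; it contains a reduction to an obstacle that you yourself identify and leave open, and that obstacle is precisely the theorem. The preliminary steps are fine (the identity $G\circ f=c\,G^{q}$, its differentiated form, the invariance of the singular locus), but none of them produce the bound. The mechanism you propose for the core estimate --- cutting by a general linear space and counting $S=\mathrm{Sing}\,X$ via sections of the Jacobian ideal, each of degree $d-1$ --- cannot work, and not merely because Bézout gives the weaker $d(d-1)^{n-l-1}$: the inequality $(d-1)^{n-l-1}<\binom{n}{l+1}$ is not a statement about the degree or the position of the singular locus at all. It is a \emph{dynamical} statement, and no amount of intersection theory applied to $X$ and its Jacobian ideal alone can yield it, since a hypersurface with small singular locus and large degree exists in abundance; what is being bounded is the degree of a hypersurface \emph{admitting a totally invariant structure}, so the endomorphism must enter the estimate itself, not just the setup.

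The missing idea is a comparison of Chern classes under pullback by $f$, combined with iteration. Concretely, the paper's proof runs: (a) $\Omega_{\p^{n}}(\log X)\otimes\mathcal{O}(1)$ is globally generated away from $X_{sing}$ by the explicit sections $d(X_{i}h)/h$; (b) the logarithmic differential of $f$ gives an \emph{injective} sheaf morphism $f^{*}\bigl(\Omega_{\p^{n}}(\log X)\otimes\mathcal{O}(1)\bigr)\to\Omega_{\p^{n}}(\log X)\otimes\mathcal{O}(m)$, where $m$ is the algebraic degree of $f$; (c) restricting to $M=f^{-1}(P)$ for a \emph{general} plane $P$ of dimension $k=n-l-1$ (note: dimension $k$, not $n-l$ as in your cut --- this is what guarantees $M\cap X_{sing}=\varnothing$, where the logarithmic sheaf is locally free), a degeneracy-locus lemma gives
\begin{equation}
c_{k}\bigl(f^{*}(\Omega_{\p^{n}}(\log X)(1))\otimes\mathcal{O}_{M}\bigr)\leq c_{k}\bigl(\Omega_{\p^{n}}(\log X)(m)\otimes\mathcal{O}_{M}\bigr);\nonumber
\end{equation}
(d) both sides are computed, $c_{k}(\Omega_{\p^{n}}(\log X)(1))=(d-1)^{k}H^{k}$ while $c_{k}(\Omega_{\p^{n}}(\log X)(m))=\bigl[\binom{n}{k}m^{k}+\binom{n-1}{k-1}(d-n-1)m^{k-1}+o(m^{k-1})\bigr]H^{k}$, so the inequality reads $(d-1)^{k}m^{n}\leq\binom{n}{k}m^{n}+O(m^{n-1})$; and (e) replacing $f$ by its iterates sends $m\to\infty$, killing the lower-order terms and yielding $(d-1)^{k}\leq\binom{n}{k}$, with strictness extracted from the sign of the $m^{n-1}$ coefficient (if equality held, that coefficient would force $d=n+1$, contradicting $(d-1)^{k}=\binom{n}{k}$ for $k\geq2$). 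Your constant $\binom{n}{l+1}$ is indeed $\binom{n}{k}$, but it arises as the leading coefficient in $m$ of $c_{k}$ of the twisted logarithmic cotangent sheaf, not as a Chern number of $T_{\p^{n-1}}$ met along the singular stratum; and the exponent $n-l-1$ arises because the comparison takes place on a variety of exactly that dimension. Steps (b), (c), (e) --- the injective comparison morphism, the Chern-class inequality on a general $f$-preimage, and the iteration trick --- are absent from your proposal, and your own ``main obstacle'' paragraph is an accurate admission that without them the argument does not close.
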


\vspace{1\baselineskip}

The proof of this statement follows the method in \cite{horing} which involves the Chern classes of the logarithmic cotangent sheaf $\Omega^{1}_{\p^{n}}(log\ X)$. To understand the strengths and weaknesses of this bound, we will see multiple corollaries in the conclusion of the paper. 

\vspace{1\baselineskip}

\begin{thm}\label{thmB}
If $X$ is a divisor of $\p^{k}$ which is not totally invariant then, for all $n\geq k$, no cone of $\p^{n}$ over $X$ (see definition $\ref{defcone}$) is totally invariant .
\end{thm}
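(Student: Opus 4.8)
The plan is to argue by contraposition: assuming that some cone $C$ over $X$ in $\p^{n}$ is totally invariant, I will produce a non trivial endomorphism of $\p^{k}$ exhibiting $X$ itself as totally invariant, contradicting the hypothesis. Fix the vertex $V\cong\p^{n-k-1}$ supplied by the cone construction (definition $\ref{defcone}$), and let $g$ be a non trivial endomorphism of $\p^{n}$ with $g^{-1}(C)=C$. The idea is to linearly project $C$ from $V$ onto a complementary $k$-plane and to show that $g$ descends through this projection to a self-map of that plane.

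First I would set up the projection. For any linear subspace $L\cong\p^{k}$ complementary to $V$ (that is, $L\cap V=\emptyset$, which for these dimensions is equivalent to $L$ and $V$ spanning $\p^{n}$), the section $X_{L}:=C\cap L$ is a base of the cone and is projectively equivalent to $X$, so it suffices to realise $X_{L}$ as totally invariant in $L$. Let $\pi\colon\p^{n}\setminus V\to L$ be the linear projection from $V$. The key elementary fact I would record is that, because $C$ is the cone over $X_{L}$ with vertex $V$, one has for every $z\notin V$ the equivalence $z\in C\iff\pi(z)\in X_{L}$; this is immediate from the fact that $\pi$ collapses each ruling plane $\langle V,x\rangle$ to the point $x$.

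The map I want is $\bar g:=\pi\circ g|_{L}\colon L\dashrightarrow L$. It is a genuine morphism precisely when $g(L)\cap V=\emptyset$, i.e. when $L\cap g^{-1}(V)=\emptyset$; in that case $\bar g$ has no base points, so the algebraic degree $e\geq 2$ of $g$ is preserved and $\bar g$ is again non trivial. Granting this, total invariance is formal: for $y\in L$ we have $g(y)\notin V$, whence $\bar g(y)\in X_{L}\iff g(y)\in C\iff y\in g^{-1}(C)=C\iff y\in C\cap L=X_{L}$, using the displayed equivalence twice together with the invariance of $C$. Hence $\bar g^{-1}(X_{L})=X_{L}$, so $X$ is totally invariant, the desired contradiction.

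The main obstacle is therefore to secure a complement $L$ with $L\cap g^{-1}(V)=\emptyset$ (this automatically forces $L\cap V=\emptyset$ as well). Here I would use that $g$, being a non trivial endomorphism of $\p^{n}$, is finite, so $g^{-1}(V)$ has the same dimension $n-k-1$ as $V$; thus $W:=V\cup g^{-1}(V)$ has dimension $n-k-1$. Since a $k$-plane and $W$ satisfy $k+(n-k-1)=n-1<n$, a dimension count in the Grassmannian $G(k,n)$ shows that the incidence locus of $k$-planes meeting $W$ has dimension $\dim G(k,n)-1$, so a general $k$-plane is disjoint from $W$; choosing such an $L$ completes the argument. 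The scheme-theoretic refinement, keeping track of multiplicities through the identities $C=\pi^{*}X_{L}$ and $g^{*}C=C$, is routine and I would treat it only briefly.
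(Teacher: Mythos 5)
Your proof is correct, and it takes a genuinely more direct route than the paper's. The paper argues by induction on $n$: assuming the cone $C\subset\p^{n}$ is totally invariant under $f$, it projects from a single point $P$ of the vertex onto a hyperplane $H$ chosen so that $H\cap(f^{-1}(P)\cup\{P\})=\emptyset$ (possible since $f$ is finite), shows $\pi_{P,H}\circ f|_{H}$ is a non-trivial endomorphism of $H\cong\p^{n-1}$, and transfers total invariance by restricting to $H$ the polynomial identity $h(F_{0},\dots,F_{k})=\lambda h^{q}$ supplied by the paper's reference [fs, Lemma 4.3]; iterating descends to $\p^{k}$. You collapse this induction into a single step: you project from the entire vertex $V$ onto a general complementary $k$-plane $L$ avoiding $W=V\cup g^{-1}(V)$ (again finiteness, but now via a Grassmannian dimension count rather than avoidance of finitely many points), and you transfer invariance purely set-theoretically through the equivalence $z\in C\iff\pi(z)\in X_{L}$, with no appeal to the functional equation. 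The paper's route buys a trivial avoidance statement (the center of projection is a point, its preimage finite) and a mechanical, purely algebraic transfer of invariance; your route buys brevity — no induction bookkeeping, no external lemma — at the cost of the (standard) incidence count in $G(k,n)$ and the cone-geometry lemma, both of which you justify adequately. Two harmless slips worth fixing: the parenthetical claim that $L\cap g^{-1}(V)=\emptyset$ ``automatically forces'' $L\cap V=\emptyset$ would require $g(V)\subseteq V$, which you never establish — but it is irrelevant, since you anyway demand $L\cap W=\emptyset$ with $V\subseteq W$; and in your closing remark the identity $g^{*}C=C$ should read $g^{*}C=eC$ as divisors, though no scheme-theoretic refinement is needed at all, because total invariance in this paper is the set-theoretic equality $g^{-1}(C)=C$, which is exactly what your chain of equivalences verifies.
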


\vspace{1\baselineskip}

Since singular quadrics can be seen as cones over smooth quadrics, this result implies that any quadric is totally invariant by a non trivial endomorphism. Indeed, smooth quadrics can not be totally invariant by \cite[Thm 2]{cln}. The two theorems above join forces to give the following corollary.

\vspace{1\baselineskip}

\begin{cor}\label{corisolated}
Let $X$ be an irreducible divisor of $\p^{n}$ with isolated singularities. If $X$ is totally invariant then it is a hyperplane.
\end{cor}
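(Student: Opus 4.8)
The plan is to split the argument according to the dimension $l$ of the singular locus of $X$. Since $X$ has isolated singularities, only two cases occur: either $X$ is smooth, so $l=-1$, or its singular locus is a finite nonempty set of points, so $l=0$. In each case I would feed $X$ into Theorem \ref{thmA} to bound its degree $d$, and then analyse by hand the very few degrees that survive.

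First, suppose $X$ is smooth, so $l=-1$. Then Theorem \ref{thmA} reads $d<\binom{n}{0}^{1/n}+1=2$, which forces $d=1$; that is, $X$ is a hyperplane, exactly the desired conclusion. (One could instead invoke \cite{beauvilleart} and \cite[Thm 2]{cln} at this point, but the bound of Theorem \ref{thmA} already suffices.) Next, suppose $X$ is singular with isolated singularities, so $l=0$. Theorem \ref{thmA} now gives $d<\binom{n}{1}^{1/(n-1)}+1=n^{1/(n-1)}+1$. The elementary inequality $n<2^{n-1}$, valid for every $n\geq 3$, shows that $n^{1/(n-1)}<2$, whence $d<3$ and so $d\leq 2$; the same bound $d\leq 2$ holds for $n=2$, since there $n^{1/(n-1)}=2$. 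Because $X$ is singular it is not a hyperplane, so $d\geq 2$, and therefore $d=2$: the divisor $X$ is an irreducible quadric with a nonempty $0$-dimensional singular locus.

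It remains to exclude this last possibility, and this is the step I expect to carry the real content. An irreducible singular quadric is degenerate, and an isolated singular point forces it to have corank one; thus $X$ is a cone, with vertex the singular point, over a smooth quadric $Q\subset\p^{n-1}$. For $n=2$ the base $Q$ would be two points, so $X$ would be a pair of lines, contradicting irreducibility; hence the singular alternative is already impossible when $n=2$. For $n\geq 4$ the base $Q$ is a smooth quadric in $\p^{n-1}$ with $n-1\geq 3$, which is not totally invariant by \cite[Thm 2]{cln}; Theorem \ref{thmB} then shows that the cone $X$ over $Q$ is not totally invariant, contradicting the hypothesis. The borderline case $n=3$ (where $Q$ is merely a smooth conic) I would dispatch directly: $X$ is then a quadric of $\p^{3}$, and no such quadric is totally invariant by \cite[Thm 1.1.]{dqzhang}, again a contradiction.

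In every case the singular alternative collapses, so $X$ must be smooth, and the first step identifies it as a hyperplane. The main obstacle is precisely the degree-$2$ analysis: one must recognise the singular divisor of degree two as a cone over a smooth quadric of strictly smaller dimension, feed that base into Theorem \ref{thmB}, and supply the correct non-invariance input for the base — with the small ambient dimensions $n=2$ and $n=3$ requiring separate, slightly different treatments.
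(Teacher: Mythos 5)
Your proof is correct and follows essentially the same route the paper intends: Theorem \ref{thmA} with $l\leq 0$ forces $d\leq 2$, and the degree-two case is excluded by viewing the singular quadric as a cone over a smooth quadric and invoking Theorem \ref{thmB} together with Cerveau--Lins Neto, which is exactly how the paper packages it in Corollary \ref{corquadric}. Your separate handling of the small dimensions --- $n=2$ (no irreducible singular conic exists) and $n=3$ (Zhang's theorem on quadrics of $\p^{3}$) --- is a careful refinement of the same argument rather than a different approach.
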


\vspace{1\baselineskip}

\textbf{Aknowledgements.} This work was completed during my PhD thesis directed by Andreas Höring and Ekaterina Amerik. I wish also thank Daniele Faenzi and Amaël Broustet for their crucial advices. This thesis was supported by Université Côte d'Azur and by Laboratoire de Mathématiques J.A. Dieudonné.

\vspace{1\baselineskip}

\section{Bound of the degree}

\vspace{1\baselineskip}

Let $X$ be an irreducible divisor of $\p^{n}$ totally invariant by a non trivial endomorphism $f$ of $\p^{n}$ with $n\geq 2$. The original idea of Höring in \cite{horing} is about comparing the following Chern classes
\begin{equation}
c_{2}[\Omega_{\p^{n}}(log\ X)\otimes \mathcal{O}(r)]\nonumber
\end{equation}
for $r\in \N^{*}$. We want to adapt this point of vue to the higher Chern classes of these logarithmic cotangent sheafs. When the variety $X$ is singular, we cannot ensure that these sheafs are locally free everywhere but it's locally free and globally generated wherever $X$ is smooth. That's why the Theorem $\ref{thmA}$ involves the dimension $l$ of the singular locus. We set $k=n-l-1$, so we have
\begin{equation}
codim_{\p^{n}} X_{sing}=k+1\nonumber
\end{equation}
and we want to show the following inequality
\begin{equation}
d<\binom{n}{k}^{1/k}+1.\nonumber
\end{equation}
If $X$ is not normal (i.e. $k=1$), one remarks that the bound becomes $n+1$ which is already known. So we can suppose, without any loss of generality, that $X$ is a normal divisor of $\p^{n}$.

\vspace{1\baselineskip}

We first recall the lemma $2.2$ of \cite{horing} :

\vspace{1\baselineskip}

\begin{prop}\label{prop1}
Denoting by $h$ the equation of $X$ in $\p^{n}$ and assuming that $deg\ X \geq 2$, the sheaf $\Omega_{\mathbb{P}^{n}}(log\ X)\otimes \mathcal{O}_{\mathbb{P}^{n}}(1)$ is generated by the following global sections
\begin{equation}\label{eq1}
\frac{d(X_{0}h)}{h},...\ ,\frac{d(X_{n}h)}{h}
\end{equation}
on the complement of $X_{sing}$.
\end{prop}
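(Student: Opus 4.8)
The plan is to work on the open set $U=\p^{n}\setminus X_{sing}$, where $X$ is smooth, so that $\Omega_{\p^{n}}(\log X)$ is locally free of rank $n$ and fits into the Poincaré residue sequence
\begin{equation}
0\to \Omega_{\p^{n}}(1)\to \Omega_{\p^{n}}(\log X)(1)\xrightarrow{\ \mathrm{res}\ }\mathcal{O}_{X}(1)\to 0\nonumber
\end{equation}
of sheaves on $U$. Global generation is a local condition that can be tested on fibers by Nakayama's lemma, so it suffices to show that the $n+1$ forms in $(\ref{eq1})$ span the fiber of $\Omega_{\p^{n}}(\log X)(1)$ at every point $p\in U$, a vector space of dimension $n$.

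First I would check that each $\frac{d(X_{i}h)}{h}$ is a genuine section of $\Omega_{\p^{n}}(\log X)(1)$ over $U$ and compute its residue. Writing $\frac{d(X_{i}h)}{h}=dX_{i}+X_{i}\frac{dh}{h}$, the form is regular away from $X$, and along the smooth locus of $X$ it has at worst a logarithmic pole; since $\mathrm{res}_{X}\frac{dh}{h}=1$ and $dX_{i}$ is regular, I obtain $\mathrm{res}\left(\frac{d(X_{i}h)}{h}\right)=X_{i}|_{X}$. Because $\mathcal{O}(1)$ is generated by $X_{0},\dots,X_{n}$ on all of $\p^{n}$, a fortiori on $X$, these residues already generate the quotient $\mathcal{O}_{X}(1)$ at every point of $X\cap U$.

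Then comes the fiberwise generation, split into two cases. At a point $p\notin X$ the sheaf is just $\Omega_{\p^{n}}(1)$ and the residue vanishes; here I would verify directly, in an affine chart and using Euler's identity $\sum_{i}X_{i}\partial_{i}h=(\deg X)\,h$, that the $n+1$ forms span the $n$-dimensional fiber, the single expected linear dependence being controlled by that identity. At a point $p$ of the smooth locus of $X$, I choose local coordinates $z_{1},\dots,z_{n}$ with $X=\{z_{1}=0\}$, so that $\Omega_{\p^{n}}(\log X)$ is freely generated near $p$ by $\frac{dz_{1}}{z_{1}},dz_{2},\dots,dz_{n}$. The nonvanishing of some residue $X_{i}|_{X}(p)$ produces a section hitting the logarithmic generator $\frac{dz_{1}}{z_{1}}$, while the differentials $dX_{i}$ supply the remaining tangential directions $dz_{2},\dots,dz_{n}$; combining the two recovers the full fiber.

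The main obstacle I expect is precisely this last point: disentangling the logarithmic (residue) direction from the $n-1$ tangential ones at a smooth point of $X$, and confirming that the $n+1$ sections — which satisfy one linear relation coming from the Euler identity — still surject onto the rank-$n$ fiber rather than collapsing into the residue line. The smoothness of $X$ at $p$, which guarantees $dh(p)\neq 0$ and hence a clean splitting of the conormal direction from the tangential ones, is the hypothesis that ultimately drives the argument; it is also the reason the statement is claimed only on the complement of $X_{sing}$.
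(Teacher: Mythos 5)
Your strategy is sound, but note that there is nothing in the paper to compare it against: the paper offers no proof of this proposition, which is quoted as Lemma 2.2 of H\"oring's article \cite{horing}, and the argument there is essentially the chart computation you outline. The one step you flag as the ``main obstacle'' --- whether the $n+1$ sections might collapse into the residue line --- is not an obstacle at all; it is settled by the identity
\begin{equation}
\frac{d(X_{i}h)}{h}-\frac{X_{i}}{X_{j}}\cdot\frac{d(X_{j}h)}{h}=X_{j}\,d\!\left(\frac{X_{i}}{X_{j}}\right),\nonumber
\end{equation}
valid for all $i,j$. Indeed, on the chart $\{X_{j}\neq 0\}$, trivialize $\mathcal{O}(1)$ by $X_{j}$ and write $x_{i}=X_{i}/X_{j}$, $\tilde{h}=h/X_{j}^{d}$ (the local equation of $X$); the section with index $i=j$ restricts to the logarithmic form $\frac{d\tilde{h}}{\tilde{h}}$, and by the identity above the $\mathcal{O}$-span of your sections contains all the chart differentials $dx_{i}$ as well. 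Off $X$ the $dx_{i}$ alone span the fiber. At a point $p\in X\backslash X_{sing}$ one has $d\tilde{h}(p)\neq 0$, so $(\tilde{h},v_{2},\dots,v_{n})$ are local coordinates for suitable $v_{i}$, the fiber of $\Omega_{\p^{n}}(log\ X)$ has basis $\frac{d\tilde{h}}{\tilde{h}},dv_{2},\dots,dv_{n}$, the images of $dx_{1},\dots,dx_{n}$ span the $(n-1)$-dimensional image of $\Omega_{\p^{n},p}$ (the kernel being generated by $d\tilde{h}=\tilde{h}\cdot\frac{d\tilde{h}}{\tilde{h}}$, which dies in the fiber), and $\frac{d\tilde{h}}{\tilde{h}}$ supplies the missing logarithmic direction --- exactly as you predicted. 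One point of rigor to repair in your write-up: $dX_{i}$ is not itself a section of $\Omega_{\p^{n}}(1)$ over $\p^{n}$ (that sheaf has no nonzero global sections), so the decomposition $\frac{d(X_{i}h)}{h}=dX_{i}+X_{i}\frac{dh}{h}$ cannot be read as a sum of two global objects on $\p^{n}$; it only makes sense on the affine cone, or chart by chart after trivializing $\mathcal{O}(1)$ as above, which is also how the forms $(\ref{eq1})$ are interpreted as sections of $\Omega_{\mathbb{P}^{n}}(log\ X)\otimes\mathcal{O}_{\mathbb{P}^{n}}(1)$ in the first place.
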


\vspace{1\baselineskip}

The key point of the proof of Theorem $\ref{thmA}$ is the following lemma which permits to compare Chern classes of sheafs.

\vspace{1\baselineskip}

\begin{lm}\label{keylm}
Let $M$ be a smooth projective variety of dimension $k$, $\mathcal{E}_{1}$ and $\mathcal{E}_{2}$ be two vector bundles of rank $n\geq k$ on $M$, $V\subset H^{0}(M,\mathcal{E}_{1})$ be a vector subspace of dimension $>n$ and
\begin{equation}
\varphi : \mathcal{E}_{1}\rightarrow \mathcal{E}_{2} \nonumber
\end{equation}
an injective sheaf morphism on $M$. We assume the following :
\begin{description}
\item[(i)] the evaluation morphism $ev:V\otimes \mathcal{O}_{M}\rightarrow \mathcal{E}_{1}$ is surjective,
\item[(ii)] for all $x\in M$, one has $rg\ \varphi_{x}\geq n-k$, and
\item[(iii)] for all $1\leq i\leq k+1$, one has $dim\ \{x\in M|rg\ \varphi_{x}\leq n-i+1\}\leq k-i+1$.
\end{description}
Then we have
\begin{equation}
c_{k}(\mathcal{E}_{1})\leq c_{k}(\mathcal{E}_{2}). \nonumber
\end{equation}
\end{lm}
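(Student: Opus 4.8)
The plan is to realise both sides as degrees of explicit degeneracy loci and to compare them. Recall the Thom--Porteous description of the top Chern class: for a rank-$n$ bundle $\mathcal{F}$ on the $k$-dimensional $M$ and a morphism $\tau : \mathcal{O}_{M}^{\oplus(n-k+1)}\rightarrow \mathcal{F}$, the degeneracy locus $D(\tau)=\{x\in M\ |\ \mathrm{rg}\,\tau_{x}\leq n-k\}$ has expected codimension $k$, and whenever it is proper (that is, $0$-dimensional or empty) one has $[D(\tau)]=c_{k}(\mathcal{F})$ in $A_{0}(M)=\Z$, the ideal of maximal minors equipping $D(\tau)$ with the structure of an effective $0$-cycle. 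The whole point is thus to produce, out of the data of the lemma, two such loci representing $c_{k}(\mathcal{E}_{1})$ and $c_{k}(\mathcal{E}_{2})$, with one contained in the other.

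First I would exploit hypothesis (i). Since $\dim V>n\geq n-k+1$ and $ev$ is surjective, $\mathcal{E}_{1}$ is globally generated, so by a Kleiman-type general position argument $n-k+1$ general sections $v_{1},\dots,v_{n-k+1}\in V$ define a morphism $\sigma : \mathcal{O}_{M}^{\oplus(n-k+1)}\rightarrow \mathcal{E}_{1}$ whose degeneracy locus $Z_{1}=D(\sigma)$ is proper. Hence $Z_{1}$ is a finite set of points, $[Z_{1}]=c_{k}(\mathcal{E}_{1})$, and in particular $c_{k}(\mathcal{E}_{1})=\deg Z_{1}\geq 0$.

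Next I would push these sections through $\varphi$, setting $\psi=\varphi\circ\sigma : \mathcal{O}_{M}^{\oplus(n-k+1)}\rightarrow \mathcal{E}_{2}$ and $Z_{2}=D(\psi)$. Two facts then combine. On one hand, by Cauchy--Binet each maximal minor of $\psi$ is an $\mathcal{O}_{M}$-linear combination of the maximal minors of $\sigma$, so the ideal of minors of $\psi$ is contained in that of $\sigma$ and $Z_{1}\subseteq Z_{2}$ as closed subschemes. On the other hand, one must check that $Z_{2}$ is again proper, so that $[Z_{2}]=c_{k}(\mathcal{E}_{2})$; granting this, the inclusion of $0$-dimensional schemes gives $\deg Z_{1}\leq\deg Z_{2}$, which is exactly $c_{k}(\mathcal{E}_{1})\leq c_{k}(\mathcal{E}_{2})$.

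The main obstacle is precisely the properness of $Z_{2}$, and this is where hypotheses (ii) and (iii) enter. I would stratify $M$ by the corank of $\varphi$, writing $W_{j}=\{x\ |\ \mathrm{rg}\,\varphi_{x}=n-j\}$; by (ii) only $0\leq j\leq k$ occur, and by (iii) one gets $\dim W_{j}\leq k-j$. For $x$ outside $Z_{1}$ the image $U_{x}=\mathrm{im}\,\sigma_{x}$ is a general $(n-k+1)$-plane, and the condition $x\in Z_{2}$ forces $U_{x}\cap\ker\varphi_{x}\neq 0$ with $\dim\ker\varphi_{x}=j$; the locus where a general $(n-k+1)$-plane meets a fixed $j$-plane has codimension $k-j$. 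A Kleiman general position argument, using the freedom in choosing the $v_{i}$ inside $V$ (here $\dim V>n$ provides the room), should then show that for general sections $Z_{2}\cap W_{j}$ has dimension at most $(k-j)-(k-j)=0$ for every $j$, whence $Z_{2}$ is proper. Carrying out this transversality uniformly across all strata is the technical heart of the proof.
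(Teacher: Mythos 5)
Your argument is sound in substance, but it takes a genuinely different route from the paper's, so let me compare the two. The paper proceeds by induction on the rank $n$: it forms the incidence variety $B=\{(x,\sigma)\in M\times|V|\ :\ \varphi_{x}(\sigma(x))=0\}$, bounds its dimension by $\dim|V|-n+k$ using the rank strata of $\varphi$ together with (i)--(iii), and concludes that for $n>k$ a generic $\sigma\in V$ has $\varphi\circ\sigma$ (hence $\sigma$) nowhere vanishing; quotienting $\mathcal{E}_{1}$ and $\mathcal{E}_{2}$ by the resulting trivial subbundles preserves the hypotheses and the class $c_{k}$, and the induction terminates at $n=k$, where the zero schemes satisfy $Z(\sigma)\subseteq Z(\varphi\circ\sigma)$, both finite, giving $c_{k}(\mathcal{E}_{1})=\deg Z(\sigma)\leq\deg Z(\varphi\circ\sigma)=c_{k}(\mathcal{E}_{2})$. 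Your proof ``unrolls'' this induction: you take $n-k+1$ sections at once and invoke Thom--Porteous, with Cauchy--Binet playing exactly the role that the inclusion of zero schemes plays in the paper's base case. What you gain is directness (no induction, no quotient bundles); what you pay is heavier machinery: you must know that the degeneracy class equals the scheme-theoretic cycle $[Z_{2}]$, which holds because the codimension is the expected one and $M$ is smooth, hence Cohen--Macaulay (also, $A_{0}(M)$ is not $\mathbb{Z}$ in general --- what you really compare are degrees, which is all that is needed).

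One point does need repair. Kleiman's transversality theorem does not literally apply here: $M$ carries no transitive group action moving the strata $W_{j}$ and the kernels of $\varphi$, so ``general position'' cannot be invoked as a black box, and your phrase ``$U_{x}$ is a general $(n-k+1)$-plane'' conflates genericity of the tuple of sections with genericity at every point $x$ simultaneously. What replaces Kleiman --- and what closes the gap --- is the incidence-variety count that is also the engine of the paper's proof: inside $M\times V^{\oplus(n-k+1)}$ consider the locus of pairs $\bigl(x,(v_{1},\dots,v_{n-k+1})\bigr)$ with $\mathrm{rg}\,\bigl(\varphi_{x}\circ ev_{x}(v_{1}),\dots,\varphi_{x}\circ ev_{x}(v_{n-k+1})\bigr)\leq n-k$. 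By (i) the evaluation $V\to\mathcal{E}_{1,x}$ is surjective, so over $x\in W_{j}$ the composite $V\to\mathcal{E}_{2,x}$ has image of dimension $n-j$, and the fibre is the preimage, under a surjective linear map, of the locus of $(n-k+1)$-tuples of vectors of rank $\leq n-k$ in a space of dimension $n-j$; that locus has codimension $\bigl((n-j)-(n-k)\bigr)\cdot\bigl((n-k+1)-(n-k)\bigr)=k-j$. Since $\dim W_{j}\leq k-j$ by (iii) (and only $j\leq k$ occurs by (ii)), the incidence variety has dimension at most $\dim V^{\oplus(n-k+1)}$, so the general fibre of the projection to $V^{\oplus(n-k+1)}$ is finite; this is precisely the properness of $Z_{2}$ (and of $Z_{1}$, by the same count with $\varphi=\mathrm{id}$). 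With this dimension count written out, your proof is complete.
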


\vspace{1\baselineskip}

\begin{proof}[Proof]
One denotes by $|V|$ the projective space obtained from the vector space $V$ and we consider the closed set
\begin{equation}
B:=\{(x,\sigma)\in M\times |V|\ | (\varphi\circ ev)_{x}(\sigma(x))=0\}\nonumber
\end{equation}
with its two natural projections
\begin{equation}
p_{1}:B\rightarrow M,\ p_{2}: B\rightarrow |V|. \nonumber
\end{equation}
One sets, for all $1\leq i\leq k+1$,
\begin{equation}
R_{i}:= \{x\in M| rg\ \varphi_{x}=n-i+1\}\nonumber
\end{equation}
with $\cup_{i=1}^{k+1}R_{i}=M$ by $(ii)$.\\
If $x\in R_{i}$ then $rg(\varphi\circ ev)_{x}=n-i+1$ and so
\begin{equation}
dim\ p_{1}^{-1}(x)=dim\ |V|-n+i-1.\nonumber
\end{equation}
However, by $(iii)$, we have $dim\ R_{i}\leq k-i+1$ and the dimension of the irreducible components of $B$ are bounded by $dim\ |V|-n+k$.
We finish the proof by induction on $n\geq k$:\\
\indent \underline{\emph{If $n=k$,}} then the dimension of the irreducible components of $B$ are bounded by $dim\ |V|$ so the generic fibers of $p_{2}$ are finite. We deduce that for a generic section $\sigma$ of $\mathcal{E}_{1}$, the section $\varphi\circ \sigma$ of $\mathcal{E}_{2}$ has only isolated zeros. Since $\varphi$ is injective, the generic section $\sigma$ has also isolated zeros and we naturally obtain $c_{k}(\mathcal{E}_{1})\leq c_{k}(\mathcal{E}_{2})$.\\
\indent \underline{\emph{If $n>k$,}} then the dimension of the irreducible components of $B$ are bounded by $dim\ |V|-n+k<dim\ |V|$ so the generic fibers of $p_{2}$ are empty. Therefore, for a generic section $\sigma$ of $\mathcal{E}_{1}$, the section $\varphi\circ\sigma$ never vanishes and neither does $\sigma$. These two sections $\sigma$ and $\varphi\circ\sigma$ define a trivial subbundle of $\mathcal{E}_{1}$ and $\mathcal{E}_{2}$ respectively. Furthermore, the quotients $\mathcal{E}_{1}/\mathcal{O}_{M}$ and $\mathcal{E}_{2}/\mathcal{O}_{M}$, the global section subspace $V/\mathbb{C}\sigma$ and the induced morphism
\begin{equation}
\widetilde{\varphi}: \mathcal{E}_{1}/\mathcal{O}_{M}\rightarrow \mathcal{E}_{2}/\mathcal{O}_{M}\nonumber
\end{equation}
satisfy the lemma's hypothesis so we can apply the induction hypothesis. Finally, we know that $c(\mathcal{E}_{i})=c(\mathcal{E}_{i}/\mathcal{O}_{M})$ because of the following short exact sequence
\begin{equation}
0\rightarrow \mathcal{O}_{M}\rightarrow \mathcal{E}_{i}\rightarrow \mathcal{E}_{i}/\mathcal{O}_{M}\rightarrow 0.\nonumber
\end{equation}
In particular we obtain $c_{k}(\mathcal{E}_{i})=c_{k}(\mathcal{E}_{i}/\mathcal{O}_{M})$ that finishes the proof.
\end{proof}

\vspace{1\baselineskip}

Before applying the previous lemma, we shall recall the Proposition III.10.6. in \cite{hartshorne}. 

\vspace{1\baselineskip}

\begin{prop}\label{prophartshorne}
If $f:X\rightarrow Y$ is a finite morphism between two $n$-dimensional varieties then
\begin{equation}
codim_{X}\{x\in X| rg\ d_{x}f\leq n-i\}\geq i.\nonumber
\end{equation}
\end{prop}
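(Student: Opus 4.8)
The plan is to reformulate the statement as the dimension bound $dim\ Z_{i}\leq n-i$, where $Z_{i}:=\{x\in X\ |\ rg\ d_{x}f\leq n-i\}$, and to prove it by induction on the common dimension $n$, cutting both $X$ and $Y$ by a general hyperplane. Since we work over $\mathbb{C}$, the finite (hence dominant) morphism $f$ is generically smooth, so $Z_{1}$ is the ramification locus, a proper closed subset; moreover $Z_{i}=\emptyset$ as soon as $i>n$, which settles those cases for free. For the base case $n=1$ I would use that a finite morphism of smooth curves over $\mathbb{C}$ ramifies at finitely many points, so $dim\ Z_{1}\leq 0$ and $Z_{i}=\emptyset$ for $i\geq 2$. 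Hence I may assume $n\geq 2$ and, as there is otherwise nothing to prove, $dim\ Z_{i}\geq 1$.

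For the inductive step I would fix a very ample linear system on $Y$ and, by Bertini, pick a general member $H$ so that $Y':=Y\cap H$ is smooth and irreducible of dimension $n-1$. As $f$ is a morphism and the system is base point free, its pullback stays base point free on $X$, so Bertini again makes $X':=f^{-1}(H)=f^{*}H$ smooth and irreducible of dimension $n-1$; restricting $f$ to this closed subscheme gives a finite dominant morphism $f':X'\rightarrow Y'$ between smooth varieties of dimension $n-1$, to which the induction hypothesis will apply. The crux is then to compare ranks: writing $h$ for a local equation of $H$ and $y=f(x)$, one has $T_{x}X'=(d_{x}f)^{-1}(T_{y}Y')$, hence $d_{x}f(T_{x}X')=d_{x}f(T_{x}X)\cap T_{y}Y'$.

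The hard part will be to see that this intersection always drops the rank by exactly one along $X'$. The key observation is that smoothness of $X'=f^{*}H$ at a point $x$ is equivalent to $d(h\circ f)_{x}\neq 0$, i.e. to $d_{x}f(T_{x}X)\not\subseteq T_{y}Y'$; this is precisely the transversality that Bertini guarantees, for a general $H$, at every point of $X'$. Granting it, $rg\ d_{x}f'=rg\ d_{x}f-1$ for all $x\in X'$, so that $Z_{i}(f)\cap X'\subseteq Z_{i}(f')$ where $Z_{i}(f'):=\{rg\ df'\leq (n-1)-i\}$. To conclude, I would note that a general member meets, but does not contain, each positive-dimensional component of $Z_{i}(f)$ (the pullback system being base point free and $f$ finite, so $H$ meets each positive-dimensional $f(W)$), whence $dim\,(Z_{i}(f)\cap X')\geq dim\ Z_{i}(f)-1$; combining this with the inclusion and the induction hypothesis gives
\begin{equation}
dim\ Z_{i}(f)-1\leq dim\,(Z_{i}(f)\cap X')\leq dim\ Z_{i}(f')\leq (n-1)-i,\nonumber
\end{equation}
that is $dim\ Z_{i}(f)\leq n-i$, as wanted. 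The essential inputs are the characteristic zero hypothesis, used for both generic smoothness and Bertini, and the identification of Bertini transversality with the one step rank drop; the rest is bookkeeping on dimensions.
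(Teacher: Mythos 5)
Your proof is correct, but it takes a genuinely different route from the paper: the paper does not prove this proposition at all, it simply recalls it from \cite[Prop. III.10.6]{hartshorne}, whose proof is a one-step application of generic smoothness. In that argument one takes an irreducible component $Z$ of the degeneracy locus $Z_{i}=\{rg\ d_{x}f\leq n-i\}$, applies generic smoothness (characteristic $0$) to $f|_{Z}:Z\rightarrow \overline{f(Z)}$ to produce a smooth point $z$ where $d_{z}(f|_{Z})$ surjects onto $T_{f(z)}\overline{f(Z)}$; since $d_{z}(f|_{Z})$ factors through $d_{z}f$, whose rank is at most $n-i$, this gives $\dim f(Z_{i})\leq n-i$, and finiteness of $f$ only enters at the very end to transfer the bound from $f(Z_{i})$ to $Z_{i}$. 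Your hyperplane-section induction replaces this with Bertini, and the delicate points are handled correctly: the identification of smoothness of $f^{*}H$ at $x$ with $d(h\circ f)_{x}\neq 0$, hence with the exact one-step rank drop $rg\ d_{x}f'=rg\ d_{x}f-1$ (note this also forces $f^{-1}(H)$ to avoid the rank-zero locus, which is consistent with, and indeed proves, the $i=n$ case), and the bookkeeping $\dim Z_{i}(f)-1\leq \dim (Z_{i}(f)\cap X')\leq \dim Z_{i}(f')\leq (n-1)-i$ closes the induction. What you pay for self-containedness is generality: you need both $X$ and $Y$ smooth and projective (Bertini on both sides, and the fact that a hyperplane meets every positive-dimensional closed set $f(W)$), and finiteness is used throughout, whereas Hartshorne's argument needs only the source nonsingular, no properness, and no finiteness until the last line. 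This difference is not academic in this paper: Proposition \ref{propkeylm} applies the statement not only to $f:\mathbb{P}^{n}\rightarrow\mathbb{P}^{n}$ but also to $f|_{X}:X\rightarrow X$ with $X$ a possibly singular hypersurface (implicitly restricted to its smooth, hence only quasi-projective, locus), a case your induction does not literally cover. Two minor points to tighten: irreducibility of $X'=f^{-1}(H)$ requires the connectedness form of Bertini (the morphism attached to $f^{*}|\mathcal{O}_{Y}(1)|$ has image of dimension $n\geq 2$), or one can simply drop irreducibility and argue on each component; and, as you rightly stress, both routes use characteristic zero essentially (Frobenius in characteristic $p$ violates the statement).
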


\vspace{1\baselineskip}

We now set $P\subset \mathbb{P}^{n}$ a general $k$-dimensional plane such that $M=f^{-1}(P)$ is a smooth $k$-dimensional variety and $M\cap X_{sing}=\varnothing$. We also set
\begin{equation}
\varphi : f^{*}(\Omega_{\mathbb{P}^{n}}(log\ X)\otimes \mathcal{O}_{\mathbb{P}^{n}}(1))\otimes \mathcal{O}_{M}\rightarrow \Omega_{\mathbb{P}^{n}}(log\ X)\otimes \mathcal{O}_{\mathbb{P}^{n}}(m)\otimes \mathcal{O}_{M}\nonumber
\end{equation}
the injective sheaf morphism on $M$ induced by the logarithmic differential of $f$. The integer $m$ is the algebraic degree of $f$, we have $m\geq 2$ and $f^{*}(\mathcal{O}(1))=\mathcal{O}(m)$. Finally we denote by $V\subset H^{0}(M, \Omega_{\mathbb{P}^{n}}(log\ X)\otimes \mathcal{O}_{\mathbb{P}^{n}}(1)\otimes \mathcal{O}_{M})$ the subspace generated by the restrictions to $M$ of the global sections defined by $(\ref{eq1})$. Ones remarks that $M$ is general $k$-dimensional variety in the following sense : if $V$ is a variety with $dim\ V\leq n-k-1$ then we can assume that $M\cap V=\varnothing$.

\vspace{1\baselineskip}

\begin{prop}\label{propkeylm}
All the objects defined above satisfy the hypothesis of Lemma \ref{keylm}.
\end{prop}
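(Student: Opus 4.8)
The plan is to verify the hypotheses of Lemma \ref{keylm} in turn for the data $M,\ \mathcal{E}_1,\ \mathcal{E}_2,\ \varphi,\ V$. Since $M\cap X_{sing}=\varnothing$, the divisor $X$ is smooth along $M$, so $\Omega_{\mathbb{P}^n}(log\ X)$ is locally free in a neighbourhood of $M$; hence $\mathcal{E}_1$ and $\mathcal{E}_2$ are genuine vector bundles of rank $n\geq k$ on the smooth projective $k$-fold $M$, and $\varphi$, being the logarithmic differential of the dominant map $f$, is injective. Hypothesis (i) is immediate from Proposition \ref{prop1}: the sections $(\ref{eq1})$ generate $\Omega_{\mathbb{P}^n}(log\ X)\otimes\mathcal{O}(1)$ on $\mathbb{P}^n\setminus X_{sing}$, and since $P$ is general we have $P\cap X_{sing}=\varnothing$, so $f(M)=P\subset\mathbb{P}^n\setminus X_{sing}$; pulling this surjection back by $f$ and restricting to $M$ shows that $ev$ is surjective. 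For $\dim V>n$, a constant relation $\sum a_i\,d(X_ih)/h=0$ is the same as $d(hL)/h=0$ with $L=\sum a_iX_i$, which forces $hL$ constant and hence $L=0$; the independence persists after restriction to the general plane $P=f(M)$, because the single syzygy among the $s_i$ has positive degree, so the associated map $\mathbb{P}^n\setminus X_{sing}\to\mathbb{P}^n$ is non-constant and its restriction to a general $P$ is non-constant, giving $\dim V=n+1$.

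The \emph{main obstacle} is to control the pointwise rank of the logarithmic differential $\varphi_x$, so as to feed it into the degeneracy estimate of Proposition \ref{prophartshorne}, which is stated for ordinary differentials. Off $X$ one has $\Omega_{\mathbb{P}^n}(log\ X)=\Omega_{\mathbb{P}^n}$, so $rg\ \varphi_x=rg\ d_xf$. Along $X_{reg}$ (which contains $X\cap M$) I would argue locally: in coordinates $(z_i)$ on the source and $(w_i)$ on the target with $X=\{z_1=0\}$ and $X=\{w_1=0\}$, total invariance gives $f^*w_1=z_1^a u$ with $u$ a unit and $a\geq1$; writing $\varphi_x$ in the logarithmic bases $\tfrac{dz_1}{z_1},dz_2,\dots,dz_n$ and $\tfrac{dw_1}{w_1},dw_2,\dots,dw_n$ yields the block shape
\begin{equation}
\varphi_x=\begin{pmatrix} a & 0\\ * & d_x(f|_X)\end{pmatrix},\nonumber
\end{equation}
so that $rg\ \varphi_x=1+rg\ d_x(f|_X)$, where $f|_X\colon X\to X$ is the finite endomorphism of the $(n-1)$-dimensional variety $X$ induced by $f$.

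With these two identities, Proposition \ref{prophartshorne} applied to $f$ on $\mathbb{P}^n$ and to $f|_X$ on $X_{reg}$ bounds the degeneracy loci. Setting $W_i:=\{x\in\mathbb{P}^n\mid rg\ \varphi_x\leq n-i+1\}$, the part of $W_i$ off $X$ has dimension $\leq n-i+1$ and the part in $X_{reg}$ has dimension $\leq n-i$, so $\dim(W_i\setminus X_{sing})\leq n-i+1$. I then transport this to $M$ by general position: since $M=f^{-1}(P)$ avoids $X_{sing}$, pushing $M\cap W_i$ forward by the finite map $f$ gives $\dim(M\cap W_i)\leq\dim\bigl(P\cap f(W_i\setminus X_{sing})\bigr)$, and because $P$ is a general $k$-plane with $\dim f(W_i\setminus X_{sing})\leq n-i+1$, the general linear section formula yields $\dim(M\cap W_i)\leq k-i+1$ for $1\leq i\leq k+1$, which is hypothesis (iii). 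The same computation with threshold $n-k-1$ (the case $i=k+2$) makes $M\cap\{rg\ \varphi_x\leq n-k-1\}$ empty, i.e. $rg\ \varphi_x\geq n-k$ for all $x\in M$, which is hypothesis (ii). This verifies all the assumptions of Lemma \ref{keylm}.
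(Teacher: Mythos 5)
Your proof is correct and takes essentially the same route as the paper: hypothesis (i) comes from Proposition \ref{prop1} together with the generality of $P$, and (ii)--(iii) are obtained by identifying $rg\ \varphi_x$ with $rg\ d_xf$ off $X$, relating it to $rg\ d_x(f|_X)$ along $X_{reg}$, and then applying Proposition \ref{prophartshorne} to the finite morphisms $f$ and $f|_X$ before cutting the degeneracy loci with the general plane $P$. Your block-matrix identity $rg\ \varphi_x=1+rg\ d_x(f|_X)$ is a sharper form of the paper's commutative-diagram bound $rg\ \varphi_x\geq rg\ d_x(f|_X)$ (both rest on the same local log basis $\frac{du_1}{u_1},du_2,\dots,du_n$), and your verification that $\dim V>n$ and that $\varphi$ is injective addresses hypotheses of Lemma \ref{keylm} that the paper's own proof leaves implicit.
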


\begin{proof}[Proof]
\textcolor{white}{a}
\begin{description}
\item[(i)] This follows directly from the Proposition $\ref{prop1}$.
\item[(ii)] If $x\in M\backslash X$ then $\Omega_{\mathbb{P}^{n}}(log\ X)_{x}\simeq \Omega_{\mathbb{P}^{n}}$ and the logarithmic differential is only the classic differential. Since $f$ is finite, we deduce from Proposition $\ref{prophartshorne}$ : 
\begin{equation}
dim\ \{rg\ (df_{log})_{x}\leq n-k-1\} \leq n-k-1\nonumber
\end{equation}
then, since the $k$-variety $M$ was chosen in a generic way, we have
\begin{equation}
M\cap\{rg\ (df_{log})_{x}\leq n-k-1\}=\varnothing.\nonumber
\end{equation}
so for all $x\in M\backslash X$, $rg\ \varphi_{x}\geq n-k$.\\
If $x\in X\backslash X_{sing}$ then we have $\Omega_{X,x}\subset \Omega_{\mathbb{P}^{n}}(log\ X)_{x}$. Indeed, if $\{u_{1}=0\}$ is a local equation of $X$ near $x$ so the $\C$-vector space $\Omega_{\mathbb{P}^{n}}(log\ X)_{x}$ admits
\begin{equation}
\frac{du_{1}}{u_{1}},du_{2},...,du_{n}\nonumber
\end{equation}
as a basis and $\Omega_{X,x}$ has
\begin{equation}
du_{2},...,du_{n}\nonumber
\end{equation}
as a basis. Furthermore, since $X$ is totally invariant by $f$, we have the following commutative diagram where the vertical arrows are injective :

\begin{center} $
\xymatrix{
(f|_{X})^{*}\Omega_{X,x}\ar[dd] \ar[rr]^{d(f|_{X})} & & \Omega_{X,x}\ar[dd] \\
\\
f^{*}\Omega_{\mathbb{P}^{n}}(log\ X)_{x} \ar[rr]^{df_{log}} & & \Omega_{\mathbb{P}^{n}}(log\ X)_{x}
}$
\end{center}
But, since $f|_{X}$ is finite, we have
\begin{equation}
dim\ \{rg\ d(f|_{X})\leq n-k-1\}\leq n-k-1\nonumber
\end{equation}
so, again because the $k$-dimensional variety $M$ was chosen in a generic way, we obtain
\begin{equation}
M\cap\{rg\ d(f|_{X})_{x}\leq n-k-1\}=\varnothing.\nonumber
\end{equation}
In other words, the rank of $d(f|_{X})$ on $M\cap X$ is at least $n-k$ and by the previous diagram, so is the rank of $df_{log}$ on $M\cap X$.
\item[(iii)] We fix $1\leq i\leq k+1$. For $x$ outside of $X$ and by the same reason as $(ii)$, we have
\begin{equation}
dim\{rg\ (df_{log})_{x}\leq n-i+1\}\leq n-i+1
\end{equation}
and so
\begin{equation}
dim\ M\cap\{rg\ (df_{log})_{x}\}\leq n-i+1\}\leq k-i+1.\nonumber
\end{equation}
Similarly, on $X$ we have
\begin{equation}
dim\ M\cap\{d(f|_{X})_{x}\leq n-i\}\leq k-i.\nonumber
\end{equation}
Then, by reusing the diagram in $(ii)$, we find that the dimension of $\{rg\ (df_{log})_{x}\leq n-i+1\}$ is at most $k-i+1$ and finally
\begin{equation}
dim\ \{rg\ \varphi_{x}\leq n-i+1\}\leq k-i+1.\nonumber
\end{equation}
\end{description}
\end{proof}

\vspace{1\baselineskip}

As a consequence of the key lemma, we obtain the following.

\vspace{1\baselineskip}

\begin{prop}\label{propinegchern}
Let $X$ be a prime divisor of $\p^{n}$ which is totally invariant by a non trivial endomorphism $f$ of $\p^{n}$. We set $k=\mbox{codim}_{X}X_{sing}$ and $M$ defined as in Proposition $\ref{propkeylm}$. Then we have the following inequality :
\begin{equation}\label{eq3}
c_{k}(f^{*}(\Omega_{\mathbb{P}^{n}}(log\ X)(1))\otimes \mathcal{O}_{M})\leq c_{k}(\Omega_{\mathbb{P}^{n}}(log\ X)(m)\otimes \mathcal{O}_{M}).
\end{equation}
\end{prop}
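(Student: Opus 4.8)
The plan is to deduce Proposition \ref{propinegchern} directly from the key Lemma \ref{keylm}, applied to the data assembled in the paragraph preceding Proposition \ref{propkeylm}. Explicitly, I would take $M=f^{-1}(P)$ (smooth of dimension $k$ for a general plane $P$) and set
\begin{equation}
\mathcal{E}_{1}=f^{*}(\Omega_{\p^{n}}(log\ X)(1))\otimes\mathcal{O}_{M},\qquad \mathcal{E}_{2}=\Omega_{\p^{n}}(log\ X)(m)\otimes\mathcal{O}_{M},\nonumber
\end{equation}
let $\varphi$ be the morphism induced by the logarithmic differential $df_{log}$, and let $V$ be the span of the restrictions to $M$ of the sections in $(\ref{eq1})$. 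Since $\dim M=k$, the top Chern class $c_{k}$ of a rank-$n$ bundle on $M$ lives in $H^{2k}(M)\cong\Z$, so the conclusion $c_{k}(\mathcal{E}_{1})\leq c_{k}(\mathcal{E}_{2})$ of the lemma is nothing but the inequality $(\ref{eq3})$.

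The verification breaks into two parts. First, the hypotheses that are not among (i)--(iii): as $P$ is general we may assume $M\cap X_{sing}=\varnothing$, so $\Omega_{\p^{n}}(log\ X)$ and its pullback are locally free of rank $n$ along $M$, making $\mathcal{E}_{1},\mathcal{E}_{2}$ vector bundles of rank $n\geq k$ on the smooth $k$-fold $M$; the morphism $\varphi$ is injective, being induced by the logarithmic differential of the finite map $f$; and the numerical requirement $\dim V>n$ holds because the $n+1$ sections $d(X_{0}h)/h,\dots,d(X_{n}h)/h$ are linearly independent. For this last point, a constant combination $\sum_{i}\lambda_{i}\,d(X_{i}h)/h$ equals $d(Lh)/h$ with $L=\sum_{i}\lambda_{i}X_{i}$, and this section vanishes identically only when $Lh$ is constant, hence only when $L=0$ since $\deg(Lh)=d+1\geq 2$; a general $M$ then preserves this independence, so $\dim V=n+1$. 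Second, Proposition \ref{propkeylm} supplies precisely (i), (ii) and (iii), namely surjectivity of the evaluation map (from Proposition \ref{prop1}) and the two rank estimates (from the finiteness of $f$ and $f|_{X}$ together with the genericity of $M$).

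With every hypothesis checked, Lemma \ref{keylm} yields $c_{k}(\mathcal{E}_{1})\leq c_{k}(\mathcal{E}_{2})$, which is $(\ref{eq3})$. I do not expect a genuine obstacle here, since the real content has already been placed into Lemma \ref{keylm} and Proposition \ref{propkeylm}; the statement is essentially their combination. The only point deserving care is bookkeeping: one must check that the sections of $(\ref{eq1})$, which a priori are defined only on $\p^{n}\setminus X_{sing}$, pull back under $f$ and restrict to genuine global sections of $\mathcal{E}_{1}$ over $M$ (using $M\subset f^{-1}(\p^{n}\setminus X_{sing})$), and that their linear independence survives this restriction so that $V$ indeed satisfies $\dim V>n$.
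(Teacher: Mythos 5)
Your proposal matches the paper's own route exactly: the paper presents Proposition \ref{propinegchern} as an immediate consequence of Lemma \ref{keylm} applied to the data $(M,\varphi,V)$ assembled in the setup paragraph, with hypotheses (i)--(iii) supplied by Proposition \ref{propkeylm}, which is precisely your argument. Your additional bookkeeping (local freeness of both sheaves along $M$, injectivity of $\varphi$, and the independence of the $n+1$ pulled-back sections giving $\dim V>n$) fills in details the paper leaves implicit, and it is sound.
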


\vspace{1\baselineskip}

This brings us to the computation of $c_{k}(\Omega_{\mathbb{P}^{n}}(log\ X)\otimes \mathcal{O}_{\mathbb{P}^{n}}(m))$, the result is summed up in the next proposition.

\vspace{1\baselineskip}

\begin{prop}
The $k$-th Chern class of $\Omega_{\mathbb{P}^{n}}(log\ X)\otimes \mathcal{O}_{\mathbb{P}^{n}}(1)$ does not depend of $n$. More precisely, we have
\begin{equation}
c_{k}(\Omega_{\mathbb{P}^{n}}(log\ X)\otimes \mathcal{O}_{\mathbb{P}^{n}}(1))=(d-1)^{k}H^{k}.\nonumber
\end{equation}
Furthermore, we have the following estimate
\begin{equation}
c_{k}(\Omega_{\mathbb{P}^{n}}(log\ X)\otimes \mathcal{O}_{\mathbb{P}^{n}}(m))=[\binom{n}{k}m^{k}+\binom{n-1}{k-1}(d-n-1)m^{k-1}+o(m^{k-1})]H^{k}.\nonumber
\end{equation}
\end{prop}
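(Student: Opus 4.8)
The plan is to compute the total Chern class of $\mathcal{E}:=\Omega_{\mathbb{P}^{n}}(\log X)$ in $H^{*}(\mathbb{P}^{n})=\Z[H]/(H^{n+1})$ and then read off the twists by $\mathcal{O}(1)$ and $\mathcal{O}(m)$ through the splitting principle. First I would use the logarithmic residue sequence
\begin{equation}
0\rightarrow \Omega_{\mathbb{P}^{n}}\rightarrow \Omega_{\mathbb{P}^{n}}(\log X)\rightarrow \mathcal{O}_{X}\rightarrow 0,\nonumber
\end{equation}
which is exact over $\mathbb{P}^{n}\setminus X_{sing}$. Combined with the Euler sequence (giving $c(\Omega_{\mathbb{P}^{n}})=(1-H)^{n+1}$) and the structure sequence $0\rightarrow \mathcal{O}(-d)\rightarrow \mathcal{O}\rightarrow \mathcal{O}_{X}\rightarrow 0$ (giving $c(\mathcal{O}_{X})=(1-dH)^{-1}$), multiplicativity of the total Chern class yields
\begin{equation}
c(\mathcal{E})=\frac{(1-H)^{n+1}}{1-dH}.\nonumber
\end{equation}

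Writing $c(\mathcal{E})=\prod_{i=1}^{n}(1+\alpha_{i}H)$ for the Chern roots and $c_{p}(\mathcal{E}(t))=\gamma_{p}(t)H^{p}$, twisting by $\mathcal{O}(t)$ sends each root $\alpha_{i}H$ to $(\alpha_{i}+t)H$, so the generating series is
\begin{equation}
\sum_{p}\gamma_{p}(t)u^{p}=\prod_{i}(1+(\alpha_{i}+t)u)=(1+tu)^{n}\,C\!\left(\tfrac{u}{1+tu}\right),\quad C(v):=\prod_{i}(1+\alpha_{i}v)=\frac{(1-v)^{n+1}}{1-dv},\nonumber
\end{equation}
the last equality being valid up to degree $n$, which is the only range $p\leq k\leq n$ we need. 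For $t=1$ the substitution $v=u/(1+u)$ collapses this to $\sum_{p}\gamma_{p}(1)u^{p}=(1-(d-1)u)^{-1}$, whence $c_{k}(\mathcal{E}(1))=(d-1)^{k}H^{k}$, visibly independent of $n$.

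For the estimate I would repeat the substitution with $t=m$, obtaining
\begin{equation}
\sum_{p}\gamma_{p}(m)u^{p}=\frac{(1+(m-1)u)^{n+1}}{1+(m-d)u},\qquad \gamma_{k}(m)=\sum_{j=0}^{k}\binom{n+1}{j}(m-1)^{j}(d-m)^{k-j}.\nonumber
\end{equation}
Each summand has degree exactly $k$ in $m$, so it remains to extract the top two coefficients. The $m^{k}$-coefficient is $\sum_{j}(-1)^{k-j}\binom{n+1}{j}=\binom{n}{k}$; for the $m^{k-1}$-coefficient I would expand $(m-1)^{j}$ and $(d-m)^{k-j}$ to second order, reducing to the sums $\sum_{j}(-1)^{k-j}\binom{n+1}{j}(k-j)=-\binom{n-1}{k-1}$ and $\sum_{j}(-1)^{k-j}\binom{n+1}{j}\,j=(n+1)\binom{n-1}{k-1}$ (both obtained from $\sum_{j=0}^{K}(-1)^{K-j}\binom{N}{j}=\binom{N-1}{K}$ together with $j\binom{n+1}{j}=(n+1)\binom{n}{j-1}$ and $k\binom{n}{k}=n\binom{n-1}{k-1}$). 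Assembling them gives $\binom{n-1}{k-1}(d-n-1)$, which is exactly the claimed subleading coefficient.

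The arithmetic above is routine, so the one point demanding care—the main obstacle—is the singular case: the residue sequence is only exact away from $X_{sing}$. I would handle this by noting that $\mathcal{E}$ is reflexive and $X_{sing}$ has codimension $k+1$, so $\mathcal{E}$ is locally free in codimension $k$ and the restriction $H^{2j}(\mathbb{P}^{n})\rightarrow H^{2j}(\mathbb{P}^{n}\setminus X_{sing})$ is an isomorphism for $j\leq k$; hence the classes $c_{j}(\mathcal{E})$ with $j\leq k$ are correctly computed on the open set where the sequence holds, and the two formulas—which only involve $c_{j}$ for $j\leq k$—follow.
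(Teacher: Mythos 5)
Your proposal is correct, and it genuinely diverges from the paper on both points where real work is needed. For the singular locus, the paper never leaves $\mathbb{P}^{n}$: it reduces to $X$ normal, invokes Dolgachev's globally exact sequence $0\rightarrow \Omega^{1}_{\mathbb{P}^{n}}\rightarrow \widetilde{\Omega}^{1}_{\mathbb{P}^{n}}(log\ X)\rightarrow \mu_{*}\mathcal{O}_{\widetilde{X}}\rightarrow 0$ for a desingularization $\mu$, and uses normality to identify $\mu_{*}\mathcal{O}_{\widetilde{X}}$ with $\mathcal{O}_{X}$; you instead compute on $U=\mathbb{P}^{n}\setminus X_{sing}$ and transfer the result through the isomorphism $H^{2j}(\mathbb{P}^{n})\simeq H^{2j}(U)$ for $j\leq k$, valid because $X_{sing}$ has complex codimension $k+1$. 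This is sound, avoids the normality reduction, and quietly supplies something your splitting-principle step actually requires (the rank-$n$ Chern-roots formalism is only justified where $\mathcal{E}$ is locally free, which holds on $U$); one correction of wording, though: local freeness on $U$ comes from the smoothness of $X$ there, not from reflexivity, which by itself only gives local freeness in codimension two. For the combinatorics, the paper applies a twist lemma to obtain $c_{k}(\mathcal{E}(m))$ as a double sum and then proves the identity $\sum_{j=0}^{i}(-1)^{j}\binom{n+1}{j}\binom{n-k+i-j}{i-j}=(-1)^{i}\binom{k}{i}$ by a two-parameter induction that occupies most of its proof, whereas your substitution $v=u/(1+tu)$ packages the same information into the closed forms $(1-(d-1)u)^{-1}$ and $(1+(m-1)u)^{n+1}/(1+(m-d)u)$, so that both claims follow from one-line binomial identities; as a bonus you get the exact polynomial $c_{k}(\mathcal{E}(m))=\sum_{j=0}^{k}\binom{n+1}{j}(m-1)^{j}(d-m)^{k-j}H^{k}$ rather than the paper's $o(m^{k-1})$ estimate. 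In short, the paper's route is longer but elementary and self-contained; yours is shorter, yields sharper formulas, and localizes the geometric input exactly where the paper later uses these classes (restriction to $M\subset U$).
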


\vspace{1\baselineskip}

\begin{proof}[Proof]
As a first step, we shall compute $c_{i}(\Omega_{\mathbb{P}^{n}}(log\ X))$ for $i=1,...,k$ by using the short exact sequence $(2.8)$ of \cite{dolgachev}
\begin{equation}
0\rightarrow \Omega_{\p^{n}}^{1}\rightarrow \widetilde{\Omega}^{1}_{\p^{n}}(log\ X)\overset{res}{\rightarrow}\mu_{*}\cO_{\widetilde{X}}\rightarrow 0.
\end{equation}
where $\mu : \widetilde{X}\rightarrow X$ is a desingularization of $X$. 
Since $X$ is a normal variety and by \cite[1.13.]{debarre}, we have $\mu_{*}\cO_{\widetilde{X}}\simeq \cO_{X}$. We deduce that
\begin{equation}
c_{i}(\mu_{*}\mathcal{O}_{\widetilde{X}})=c_{i}(\mathcal{O}_{X})=X^{i}=d^{i}H^{i}\nonumber
\end{equation}
for all $1\leq i\leq k$ then
\begin{equation}
c_{i}(\Omega_{\mathbb{P}^{n}}(log\ X))=(\sum_{j=0}^{i}(-1)^{j}\binom{n+1}{j}d^{i-j})H^{i}\nonumber
\end{equation}
for $i=1,...,k$.\\
We now use the lemma $A.2.1$ from \cite{these} with $\mathcal{E}=\Omega_{\mathbb{P}^{n}}(log\ X)$ and $\mathcal{L}=\mathcal{O}_{\mathbb{P}^{n}}(m)$ to find the following
\begin{eqnarray}
c_{k}(\Omega_{\mathbb{P}^{n}}(log\ X)\otimes \mathcal{O}_{\mathbb{P}^{n}}(m))&=&[\sum_{i=0}^{k}m^{k-i}(\sum_{j=0}^{i}(-1)^{j}\binom{n+1}{j}\binom{n-i}{k-i}d^{i-j})]H^{k}\nonumber\\
&=&[\binom{n}{k}m^{k}+\binom{n-1}{k-1}(d-n-1)m^{k-1}+o(m^{k-1})]H^{k}\nonumber
\end{eqnarray}
and, in particular for $m=1$,
\begin{eqnarray}
c_{k}(\Omega_{\mathbb{P}^{n}}(log\ X)\otimes \mathcal{O}_{\mathbb{P}^{n}}(1))&=&[\sum_{i=0}^{k}\sum_{j=0}^{i}(-1)^{j}\binom{n+1}{j}\binom{n-i}{k-i}d^{i-j}]H^{k}\nonumber\\
&=&[\sum_{j=0}^{k}\sum_{i=j}^{k}(-1)^{j}\binom{n+1}{j}\binom{n-i}{k-i}d^{i-j}]H^{k}\nonumber\\
&=&[\sum_{j=0}^{k}\sum_{i=0}^{k-j}(-1)^{j}\binom{n+1}{j}\binom{n-i-j}{k-i-j}d^{i}]H^{k}\nonumber\\
&=&[\sum_{i=0}^{k}[\sum_{j=0}^{k-i}(-1)^{j}\binom{n+1}{j}\binom{n-i-j}{k-i-j}]d^{i}]H^{k}.\nonumber
\end{eqnarray}
After the index change $i=k-i$ in the first summand, we obtain
\begin{equation}
c_{k}(\Omega_{\mathbb{P}^{n}}(log\ X)\otimes \mathcal{O}_{\mathbb{P}^{n}}(1))=[\sum_{i=0}^{k}[\sum_{j=0}^{i}(-1)^{j}\binom{n+1}{j}\binom{n-k+i-j}{i-j}]d^{k-i}]H^{k}\nonumber
\end{equation}
Now for proving that $c_{k}(\Omega_{\mathbb{P}^{n}}(log\ X)\otimes \mathcal{O}_{\mathbb{P}^{n}}(1))=(d-1)^{k}=\sum_{i=0}^{k}\binom{k}{i}(-1)^{i}d^{k-i}$, it suffices to show that for all $0\leq i\leq k$,
\begin{equation}
\sum_{j=0}^{i}(-1)^{j}\binom{n+1}{j}\binom{n-k+i-j}{i-j}=(-1)^{i}\binom{k}{i}.\nonumber
\end{equation}
We denote this property by $\mathcal{P}(n,k,i)$ for $1\leq i\leq k\leq n$.\\
\indent One first shows that $\mathcal{P}(n,k,k)$ for all $k$ :
\begin{equation}
\sum_{j=0}^{k}(-1)^{j}\binom{n+1}{j}\binom{n-j}{k-j}=(-1)^{k}\nonumber
\end{equation}
Now let's do an induction on $n\geq 1$ : the $n=1$ case is trivial and 
\begin{eqnarray}
\sum_{j=0}^{k}(-1)^{j}\binom{n+1}{j}\binom{n-j}{k-j}&=&\sum_{j=0}^{k}(-1)^{j}\binom{n}{j}\binom{n-j}{k-j}+\sum_{j=1}^{k}(-1)^{j}\binom{n}{j-1}\binom{n-j}{k-j}\nonumber\\
&=&\sum_{j=0}^{k}(-1)^{j}\binom{n}{k}\binom{k}{j}-\sum_{j=0}^{k-1}(-1)^{j}\binom{n}{j}\binom{n-j-1}{k-j-1}\nonumber\\
&=&0-(-1)^{k-1}\ by\ \mathcal{P}(n-1,k-1,k-1)\nonumber\\
&=&(-1)^{k}.\nonumber
\end{eqnarray}
Fixing $n$, we now show $\mathcal{P}(n,k,i)$ by an induction on $k\geq 1$ : the $k=1$ case is trivial so we assume that $\mathcal{P}(n,k,i)$ is true for all $i\leq k$ and it suffices to show that $\mathcal{P}(n,k+1,i)$ is true for all $i\leq k$ since $\mathcal{P}(n,k+1,k+1)$ has been shown previously.

\begin{eqnarray}
\sum_{j=0}^{i}(-1)^{j}\binom{n+1}{j}\binom{n-k-1+i-j}{i-j}&=&\sum_{j=0}^{i}(-1)^{i-j}\binom{n+1}{i-j}\binom{n-k-1+j}{j}\nonumber\\
&=&\sum_{j=0}^{i}(-1)^{i-j}\binom{n+1}{i-j}\binom{n-k+j}{j}\nonumber\\
&&-\sum_{j=1}^{i}(-1)^{i-j}\binom{n+1}{i-j}\binom{n-k-1+j}{j-1}\nonumber\\
&=&(-1)^{i}\binom{k}{i}\nonumber\\
&&-\sum_{j=0}^{i-1}(-1)^{i-j-1}\binom{n+1}{i-j-1}\binom{n-k+j}{j}\ par\ \mathcal{P}(n,k,i)\nonumber\\
&=&(-1)^{i}\binom{k}{i}+(-1)^{i}\binom{k}{i-1}\ par\ \mathcal{P}(n,k,i-1)\nonumber\\
&=&(-1)^{i}\binom{k+1}{i}\nonumber
\end{eqnarray}
and the statement follows.
\end{proof}

\vspace{1\baselineskip}

Now we have computed the Chern classes, we can rewrite $(\ref{eq3})$ in Proposition $\ref{propinegchern}$.

\vspace{1\baselineskip} 

\begin{proof}[Proof of theorem $\ref{thmA}$]
First of all, we have
\begin{eqnarray}
c_{k}(f^{*}(\Omega_{\mathbb{P}^{n}}(log\ X)\otimes \mathcal{O}_{\mathbb{P}^{n}}(1))\otimes \mathcal{O}_{M})&=&f^{*}((d-1)^{k}H^{k}).M\nonumber\\
&=&(d-1)^{k}m^{k}H^{k}.M\nonumber\\
&=&(d-1)^{k}m^{n}\in \mathbb{Z}\nonumber
\end{eqnarray}
because $M=f^{*}(H^{n-k})=m^{n-k}H^{n-k}$ in the cohomology ring of $\mathbb{P}^{n}$. And on the other hand,
\begin{equation}
c_{k}(\Omega_{\mathbb{P}^{n}}(log\ X)\otimes \mathcal{O}_{\mathbb{P}^{n}}(m)\otimes \mathcal{O}_{M})=\binom{n}{k}m^{n}+\binom{n-1}{k-1}(d-n-1)m^{n-1}+o(m^{n-1}).\nonumber
\end{equation}
The inequality $(\ref{eq3})$ becomes 
\begin{equation}\label{eq4}
\binom{n}{k}m^{n}+\binom{n-1}{k-1}(d-n-1)m^{n-1}+o(m^{n-1}) \geq (d-1)^{k}m^{n}.\nonumber
\end{equation}
However, when we take iterates of $f$, the integer $m$ goes to infinity and we have
\begin{equation}
(d-1)^{k}\leq \binom{n}{k}.\nonumber
\end{equation}
Furthermore, if $(d-1)^{k}=\binom{n}{k}$, then $d=n+1$ since $\binom{n-1}{k-1}(d-n-1)\leq 0$. This contradicts $(d-1)^{k}=\binom{n}{k}$ for $k\geq 2$. Finally we have shown
\begin{equation}
(d-1)^{k}<\binom{n}{k}\nonumber.
\end{equation}
\end{proof}

\vspace{1\baselineskip}

\section{The cone case}

\vspace{1\baselineskip}

This section is devoted to the study of totally invariant cones of $\p^{n}$ over a hypersurface $X$ of $\p^{k}$ with $k\leq n$. More precisely, for such a cone, we show that the hypersurface $X$ is either totally invariant in $\p^{k}$. First of all, we shall give the precise definition of a cone.

\vspace{1\baselineskip}

\begin{defi}\label{defcone}
Let $X$ be a divisor of $\p^{k}$ whose equation is $h\in \C[X_{0},...,X_{k}]$ and let $n\geq k$ be an integer, we define the cone of $\p^{n}$ over $X$ as the divisor of $\p^{n}$ whose equation is $h\in \C[X_{0},...,X_{n}]$.
\end{defi}

\vspace{1\baselineskip}

\begin{proof}[Proof of $\ref{thmB}$]
\indent Let $X$ be a divisor of $\p^{k}$ which is not totally invariant and whose equation is $h\in\C[X_{0},...,X_{k}]$. We have to show that no cone $C$ over $X$ is totally invariant. The proof proceeds by induction on the integer $n\geq k$. If $n=k$ then the cone $C$ is equal to $X$ and there is nothing to show. If $n>k$ one assumes by contradiction that there exists a non trivial endomorphism $f=(F_{0}:...:F_{n})$ of $\mathbb{P}^{n}$ with algebraic degree $q$ such that
\begin{equation}
f^{-1}(C)=C.\nonumber
\end{equation}
The total invariance of $C$ by $f$ is equivalent to the following (see \cite[Lemma 4.3.]{fs}) : there exists $\lambda\in \mathbb{C}^{*}$ such that
\begin{equation}\label{eq93}
h(F_{0},...,F_{k})=\lambda h(X_{0},...,X_{k})^{q}.
\end{equation}
Our goal is to find a non trivial endomorphism $g$ of $\p^{n-1}$ whose the cone $C'$ of $\p^{n-1}$ over $X$ is totally invariant. A first idea would be considering the rational map $(F_{0}|_{\p^{n-1}}:...:F_{n-1}|_{\p^{n-1}})$ but we meet an issue : this map is not necessarily a morphism because the $F_{i}|_{\p^{n-1}}$ for $i=1,...,n-1$ can have common zeros. However, we can solve this problem with the following method :\\

We set $P=(0:...:0:1)\in \p^{n}$. Since $f$ is a finite morphism, there exists an hyperplane $H\subset \mathbb{P}^{n}$ such that
\begin{equation}
H\cap (f^{-1}(P)\cup\{P\})=\varnothing\nonumber.
\end{equation}
Since $H=\{a_{0}X_{0}+...+a_{n}X_{n}=0\}$ does not contain the point $P$, we have $a_{n}\neq 0$ and we can assume that
\begin{equation}
H=\{X_{n}=\sum_{i=0}^{n}a_{i}X_{i}\}\nonumber.
\end{equation}
We now consider the following endomorphism of $H$ :
\begin{equation}
g:=\pi_{P,H}\circ f|_{H}\nonumber
\end{equation}
where $\pi_{P,H}$ is the projection from $P$ over $H$ whose equations are 
\begin{equation}
\pi_{P,H}:(x_{0}:...:x_{n})\mapsto(x_{0}:...:x_{n-1}:\sum_{i=0}^{n-1}a_{i}x_{i}).\nonumber
\end{equation}
It is well defined on $\mathbb{P}^{n}\backslash \{P\}$ and so $g$ is not well defined on $x\in H$ if and only if $f(x)=P$. This never happens because $H\cap f^{-1}(P)=\varnothing$ and then the map $g$ is an endomorphism of $H$.\\
Furthermore, $H\cap C$ is totally invariant by $g$ : indeed, when we restrict $(\ref{eq93})$ on $H$, we find
\begin{equation}
h(F_{0}|_{H},...,F_{k}|_{H})=\lambda h(X_{0},...,X_{k})^{q}
\end{equation}
where $h$ is the equation of $C\cap H$ and $F_{0}|_{H},...,F_{k}|_{H}$ are the $k$ first equations of the endomorphism $g$. The inclusion $C\cap H\subset H$ is isomorphic to the inclusion $C'\subset \mathbb{P}^{n-1}$ where $C'$ is the cone of $\p^{n-1}$ over $X\subset \mathbb{P}^{k}$. This is a contradiction to the induction hypothesis and then $C\subset \mathbb{P}^{n}$ is not totally invariant.
\end{proof}

\vspace{1\baselineskip}

Thanks to bilinear algebra, it is well-known that all singular irreducible quadrics are cones over a smooth irreducible quadric. These latter aren't totally invariant thanks to Cerveau-Lins Neto \cite[Thm 2]{cln} so we directly obtain the following.

\vspace{1\baselineskip}

\begin{cor}\label{corquadric}
Let $X$ be a prime divisor of $\p^{n}$ which is totally invariant. Then $X$ is not a quadric.
\end{cor}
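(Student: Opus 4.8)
The plan is to dichotomize according to whether the quadric $X$ is smooth, feeding the singular case into Theorem \ref{thmB}. First I would note that a prime quadric $X = \{q = 0\} \subset \mathbb{P}^n$ is cut out by a quadratic form $q$ of rank $r \geq 3$; ranks $1$ and $2$ give a double hyperplane and a union of two hyperplanes respectively, neither of which is irreducible. Over $\mathbb{C}$ one may diagonalize $q$ by a linear change of coordinates and assume $q = X_0^2 + \dots + X_{r-1}^2$. Then $X$ is smooth precisely when $r = n+1$, whereas for $3 \leq r \leq n$ the form involves only the first $r$ variables, so that $X$ is, in the sense of Definition \ref{defcone}, the cone of $\mathbb{P}^n$ over the smooth quadric $Y = \{X_0^2 + \dots + X_{r-1}^2 = 0\} \subset \mathbb{P}^{r-1}$, which has dimension $r - 2 \geq 1$.

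In the smooth case there is nothing further to do beyond invoking the known input: a smooth irreducible quadric is not totally invariant by \cite[Thm 2]{cln}.

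In the singular case, the base $Y \subset \mathbb{P}^{r-1}$ is itself a smooth irreducible quadric, hence not totally invariant, again by \cite[Thm 2]{cln}. Applying Theorem \ref{thmB} to $Y$ (which lives in $\mathbb{P}^{r-1}$, with $n \geq r-1$) then gives that no cone of $\mathbb{P}^n$ over $Y$ is totally invariant. Since $X$ is exactly such a cone, $X$ is not totally invariant, and the corollary follows.

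The only delicate point is the reduction step: one must check that the diagonalized singular quadric genuinely agrees with the cone of Definition \ref{defcone}, i.e. that the polynomial $X_0^2 + \dots + X_{r-1}^2$, taken as the equation of $Y$ in $\mathbb{C}[X_0, \dots, X_{r-1}]$ and reread in $\mathbb{C}[X_0, \dots, X_n]$, cuts out $X$. This is transparent from the diagonal normal form, so no real obstacle arises; the whole content of the corollary rests on Theorem \ref{thmB} together with the smooth-quadric result of \cite{cln}.
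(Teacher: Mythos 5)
Your proof is correct and takes essentially the same route as the paper: the paper likewise reduces, via the same ``bilinear algebra'' (diagonalization of the quadratic form), to the observation that a singular irreducible quadric of $\p^{n}$ is a cone over a smooth irreducible quadric, excludes smooth quadrics by \cite[Thm 2]{cln}, and concludes with Theorem \ref{thmB}. The only difference is that you spell out the rank stratification (rank $r\geq 3$ for irreducibility, $r=n+1$ exactly in the smooth case), which the paper dismisses as well known.
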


\vspace{1\baselineskip}

\section{Conclusion}

\vspace{1\baselineskip}

To conclude this paper, we give some precisions about the impact of Theorems $\ref{thmA}$ and $\ref{thmB}$ on the linearity conjecture. The main effect of this bound appear on totally invariant divisors with isolated singularities as we already present in the corollary $\ref{corisolated}$. Let $X$ be a totally invariant prime divisor of $\p^{n}$ and fix the dimension $l$ of the singular locus of $X$. In a natural way, we can wonder for which $n$ the conjecture is verified. The following statement is devoted to this question.

\vspace{1\baselineskip}

\begin{cor}
Let $X$ be a totally invariant prime divisor of $\p^{n}$ with $n\geq 4$. In the Table $\ref{tabverifconj}$ we have registered for which $n$ the conjecture is true depending on the dimension $l$ of the singular locus of $X$.
\begin{table}[h]
\hspace*{-1cm}\begin{tabular}{|c|c|c|c|c|c|c|c|c|}
\hline
Values of $l$ & $-1$ & $0$ & $1$ & $2$ & $3$ & $4$ & $5$ & $6$\\
\hline
The conjecture is true if & $n\geq 4$ & $n\geq 4$ & $n\geq 6$ & $n\geq 10$ & $n\geq 14$ & $n\geq 19$ & $n\geq 23$ & $n\geq 27$\\
\hline
\end{tabular}
\caption{}
\label{tabverifconj}
\end{table}
\end{cor}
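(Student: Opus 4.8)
The plan is to read off the conjecture's conclusion $d=1$ directly from the degree bound of Theorem~\ref{thmA}, using Corollary~\ref{corquadric} to eliminate the single intermediate value it leaves open. Keep the notation $k=n-l-1$ from Section~2, so that Theorem~\ref{thmA} reads $d<\binom{n}{l+1}^{1/k}+1$ for a totally invariant prime divisor $X$ of degree $d$ with $\dim X_{sing}=l$. Because $X$ is prime and totally invariant, Corollary~\ref{corquadric} forbids $d=2$. Consequently, as soon as the right-hand side of the bound is at most $3$ we obtain $d<3$, hence $d\in\{1,2\}$, and therefore $d=1$: the divisor is a hyperplane and Conjecture~\ref{conjbcs} holds for that pair $(n,l)$.

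First I would reformulate this sufficient condition. The inequality $\binom{n}{l+1}^{1/k}+1\le 3$ is equivalent to $\binom{n}{l+1}^{1/k}\le 2$, that is to
\begin{equation}
\binom{n}{l+1}\le 2^{\,n-l-1}.\nonumber
\end{equation}
Thus for each fixed $l$ the corollary reduces to finding the integers $n\ge 4$ satisfying this single inequality. To guarantee that the set of such $n$ is an interval $[n_{0}(l),\infty)$ — so that the table need only record the threshold $n_{0}(l)$ — I would check that $\binom{n}{l+1}/2^{\,n-l-1}$ is eventually strictly decreasing in $n$. The ratio of consecutive values is
\begin{equation}
\frac{\binom{n+1}{l+1}\,2^{\,n-l-1}}{\binom{n}{l+1}\,2^{\,n-l}}=\frac{n+1}{2(n-l)},\nonumber
\end{equation}
which is $<1$ exactly when $n>2l+1$; since every threshold in Table~\ref{tabverifconj} lies well above $2l+1$, the inequality persists once it first holds.

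It then remains to verify the thresholds numerically for $l=-1,0,\dots,6$. For $l=-1$ the bound already gives $d<2$, so $d=1$ even without Corollary~\ref{corquadric}; for $l=0$ one checks $n\le 2^{n-1}$ for all $n\ge 4$; and for $l\ge 1$ one evaluates both sides at the listed threshold and at the value just below it to witness the transition — for example $\binom{14}{4}=1001\le 1024=2^{10}$ whereas $\binom{13}{4}=715>512=2^{9}$, yielding $n\ge 14$ in the row $l=3$. The remaining rows are confirmed by the same two binomial evaluations.

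The only real subtlety is conceptual: the bound of Theorem~\ref{thmA} by itself yields merely $d<3$ (not $d<2$) at the critical values of $n$, so it cannot prove linearity on its own; it is precisely the exclusion of quadrics in Corollary~\ref{corquadric} that upgrades $d\le 2$ to $d=1$. Once this interplay is recognised, the proof is an elementary monotonicity argument together with the finite table of binomial estimates above.
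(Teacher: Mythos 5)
Your proof is correct and follows essentially the same route as the paper: both combine the bound of Theorem~\ref{thmA} with the exclusion of quadrics (Corollary~\ref{corquadric}, coming from Theorem~\ref{thmB} together with Cerveau--Lins Neto) to reduce each row of the table to the single inequality $\binom{n}{l+1}\leq 2^{\,n-l-1}$, verified numerically for each $l$. Your ratio test $\frac{n+1}{2(n-l)}<1$ for $n>2l+1$ is a rigorous filling-in of what the paper dismisses as ``a simple study of the function $\phi_{l}$'', but it does not change the substance of the argument.
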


\vspace{1\baselineskip}

\begin{proof}[Proof]
Assume that the divisor $X$ is not a hyperplane so Theorem $\ref{thmB}$ gives that the degree $d$ of $X$ is necessarily at least $3$. According to Theorem $\ref{thmA}$, we find
\begin{equation}
2^{n-l-1}\leq (d-1)^{n-l-1}<\binom{l+1}{n}\nonumber.
\end{equation}
We deduce that, fixing the integer $l$, the conjecture is true when the function
\begin{equation}
\phi_{l}(n)=2^{n-l-1}-\binom{l+1}{n}\nonumber
\end{equation}
is non negative. For each value of $l$ in the table above, it suffices to find the values of $n$ such that $\phi_{l}(n)$ is non negative. A simple study of the function $\phi_{l}$ in each case gives the results of the table.
\end{proof}

\vspace{1\baselineskip}

We deduce from the previous corollary that in order to prove conjecture for $\p^{4}$ and $\p^{5}$, it is sufficient to verify it for divisors with $1$-dimensional singular locus.

\vspace{1\baselineskip}

We now wonder what happens for totally invariant prime divisors of $\p^{n}$ with big degrees (recall that the degree of such a divisor is bounded by $n-1$). Let's begin with the general case.

\vspace{1\baselineskip}

\begin{cor}\label{cornonnorm}
Let $X$ be a totally invariant prime divisor of $\p^{n}$ and let $l$ be the dimension of the singular locus of $X$. Assume that the degree of $X$ is $n-k$ for an integer $1\leq k\leq n-3$ then the divisor $X$ is necessarily non normal whenever $n\geq n_{k}:=2k+3/2+\sqrt{2k^{2}+2k+1/4}$.
\end{cor}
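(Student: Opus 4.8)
The plan is to argue by contradiction: assume $X$ is normal and derive a violation of the bound of Theorem \ref{thmA}. Recall from Section 2 that a hypersurface is Cohen--Macaulay, so by Serre's criterion $X$ is normal exactly when its singular locus has codimension at least $2$ in $X$; setting $e:=n-l-1$ for the codimension of $X_{sing}$ inside $X$, this means $X$ is normal iff $e\ge 2$, the value $e=1$ being precisely the non-normal case. Writing the degree as $d=n-k$, Theorem \ref{thmA} is equivalent to $(d-1)^{e}<\binom{n}{e}$, that is
\begin{equation}
(n-k-1)^{e}<\binom{n}{e}.\nonumber
\end{equation}
It therefore suffices to show that, as soon as $n\ge n_{k}$, the reverse inequality $(n-k-1)^{e}\ge\binom{n}{e}$ holds for every $e\ge 2$: this contradicts Theorem \ref{thmA}, rules out $e\ge 2$, and forces $e=1$, i.e. $X$ non-normal.

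First I would settle the case $e=2$, which is where the threshold $n_{k}$ originates. The inequality $(n-k-1)^{2}\ge\binom{n}{2}$ rearranges (after clearing the factor $2$) into
\begin{equation}
n^{2}-(4k+3)n+2(k+1)^{2}\ge 0,\nonumber
\end{equation}
a quadratic in $n$ with discriminant $8k^{2}+8k+1$ and larger root
\begin{equation}
\frac{(4k+3)+\sqrt{8k^{2}+8k+1}}{2}=2k+\tfrac{3}{2}+\sqrt{2k^{2}+2k+\tfrac14}=n_{k}.\nonumber
\end{equation}
Hence $n\ge n_{k}$ gives exactly $(n-k-1)^{2}\ge\binom{n}{2}$.

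Next I would propagate this to all $e\ge 2$ through a monotonicity argument on the ratio $a_{e}:=\binom{n}{e}/(n-k-1)^{e}$. Passing from $e$ to $e+1$ multiplies $a_{e}$ by $\frac{n-e}{(e+1)(n-k-1)}$, a factor which is decreasing in $e$ and therefore maximal at $e=2$, where it equals $\frac{n-2}{3(n-k-1)}$; since $n\ge n_{k}>2k\ge\frac{3k+1}{2}$ (using $k\ge 1$), this factor is at most $1$ for every $e\ge 2$. Thus $(a_{e})_{e\ge 2}$ is non-increasing, so $a_{e}\le a_{2}\le 1$ for all $e\ge 2$ whenever $n\ge n_{k}$, which is precisely the wanted estimate (the extreme case $e=n$, corresponding to $X$ smooth, being trivial as $\binom{n}{n}=1\le(n-k-1)^{n}$).

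The only genuinely delicate point is the uniformity in $e$: since the dimension $l$ of $X_{sing}$ is a priori unknown, Theorem \ref{thmA} must be contradicted for every possible value of $e$ ranging from $2$ to $n$ at once, and it is the monotonicity of $a_{e}$ that collapses this whole range to the single quadratic inequality at $e=2$. The fact that the multiplicative step is also extremal at $e=2$ is what makes $n_{k}$, defined through the $e=2$ root, the correct uniform threshold. The remaining steps --- expanding the quadratic, identifying its larger root, and verifying $n_{k}>2k$ --- are routine.
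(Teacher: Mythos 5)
Your proof is correct, and its core computation coincides with the paper's: writing $d=n-k$ and applying Theorem \ref{thmA} in codimension $e=2$, both arguments reduce to the quadratic inequality $n^{2}-(4k+3)n+2(k+1)^{2}\geq 0$, whose larger root is precisely $n_{k}$. The difference is in scope, and it matters. The paper's own proof stops at that case: it assumes $l=n-3$ (i.e. $\mathrm{codim}_{X}X_{sing}=2$) and derives the contradiction only there, leaving all other normal configurations $l<n-3$ (codimension $e\geq 3$, up to the smooth case $e=n$) untreated; implicitly it takes for granted that the bound of Theorem \ref{thmA} is most permissive at $e=2$. You identified this uniformity issue explicitly and closed it with the monotonicity of $a_{e}=\binom{n}{e}/(n-k-1)^{e}$: the step ratio $\frac{n-e}{(e+1)(n-k-1)}$ is decreasing in $e$ and already at most $1$ at $e=2$ once $n\geq n_{k}>2k\geq\frac{3k+1}{2}$, so $a_{e}\leq a_{2}\leq 1$ for every $e\geq 2$, contradicting Theorem \ref{thmA} for every possible codimension at once and forcing $e=1$, i.e. non-normality. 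So your argument is not merely a valid alternative: it supplies the justification that the paper's proof, as written, is missing. (For the extreme case one could also note that a smooth totally invariant divisor of degree $n-k\geq 3$ is already excluded by Beauville's theorem cited in the introduction, but your handling via $\binom{n}{n}=1\leq(n-k-1)^{n}$ is equally fine and keeps the proof self-contained.)
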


\vspace{1\baselineskip}

\begin{proof}[Proof]
Let assume that $l=n-3$ then the theorem $\ref{thmA}$ gives the following inequality
\begin{equation}
(n-k-1)^{2}-\frac{n(n-1)}{2}<0\nonumber.
\end{equation}
This is a quadratic polynomial whose greater root is exactly
\begin{equation}
n_{k}=2k+3/2+\sqrt{2k^{2}+2k+1/4}.\nonumber
\end{equation}
We conclude that the latter inequality is no longer true when $n\geq n_{k}$.
\end{proof}

\vspace{1\baselineskip}

In order to give a meaning to the above corollary, we shall precise the two special cases below : whom involve respectively the $n-1$ and $n-2$ degree case.

\vspace{1\baselineskip}

\begin{cor}
Let $X$ be a totally invariant prime divisor of $\p^{n}$ with $n\geq 4$ and let $l$ be the dimension of the singular locus of $X$. Assume that the degree of $X$ is $n-1$ then :
\begin{description}
\item[(1)] the dimension $l$ is $n-2$ or $n-3$ for $n=4,5$ ; and
\item[(2)] the divisor $X$ is necessarily non normal when $n\geq 6$.
\end{description}
\end{cor}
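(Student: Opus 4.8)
The plan is to rewrite Theorem \ref{thmA} in the arithmetic form used in Section 2 and then run an elementary case analysis on the codimension $k:=\mathrm{codim}_{X}X_{sing}=n-l-1$. Recall that Theorem \ref{thmA} is equivalent to $(d-1)^{k}<\binom{n}{k}$, and that $X$ is non normal exactly when $k=1$, i.e. $l=n-2$, while $k\geq 2$ is the normal range. Since $X$ is a prime divisor, $X_{sing}$ is a proper closed subset of the $(n-1)$-dimensional $X$, hence $-1\leq l\leq n-2$, that is $1\leq k\leq n$. With $d=n-1$, so $d-1=n-2$, the only inequality to understand is
\[
(n-2)^{k}<\binom{n}{k}, \qquad 2\leq k\leq n,
\]
which is what a normal $X$ would have to satisfy for its particular value of $k$.

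The key observation, which lets me test a single value of $k$ instead of a whole range, is a monotonicity statement. Setting $h(k):=\binom{n}{k}(n-2)^{-k}$, a direct computation gives
\[
\frac{h(k+1)}{h(k)}=\frac{n-k}{(k+1)(n-2)}\leq\frac{1}{3}<1\qquad(k\geq 2),
\]
so $h$ is strictly decreasing on $\{k\geq 2\}$. Hence if the displayed inequality fails at some $k_{0}\geq 2$ (that is $h(k_{0})\leq 1$), it fails at every $k\geq k_{0}$, and Theorem \ref{thmA} excludes all those $k$ at once.

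For part $(2)$ I assume $n\geq 6$ and suppose, for contradiction, that $X$ is normal, so $k\geq 2$. By the monotonicity it suffices to check $k=2$: the condition $h(2)\leq 1$ is $n(n-1)\leq 2(n-2)^{2}$, i.e. $n^{2}-7n+8\geq 0$, which holds for all $n\geq 6$. Thus $h(k)\leq 1$ for every $k\geq 2$, contradicting Theorem \ref{thmA}; therefore $k=1$ and $X$ is non normal. This is exactly Corollary \ref{cornonnorm} specialised to degree $n-1$, whose threshold is $n_{1}=(7+\sqrt{17})/2\approx 5.56<6$.

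For part $(1)$ I take $n\in\{4,5\}$. Here $k=1$ ($l=n-2$) is never constrained by Theorem \ref{thmA}, and $k=2$ ($l=n-3$) survives since $h(2)>1$ in both cases ($6/4$ for $n=4$ and $10/9$ for $n=5$), so neither of these values can be excluded. It remains to rule out $k\geq 3$, which also absorbs the smooth case $l=-1$. By the monotonicity I only need $k=3$: for $n=4$ one has $(n-2)^{3}=8>4=\binom{4}{3}$, and for $n=5$ one has $27>10=\binom{5}{3}$, so $h(3)<1$ and therefore $h(k)<1$ for all $k\geq 3$; Theorem \ref{thmA} then forbids every $k\geq 3$, leaving $k\in\{1,2\}$, i.e. $l\in\{n-2,n-3\}$. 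Everything here is a finite, elementary verification, and the only genuinely structural point — hence the main (mild) obstacle — is the reduction to a single $k$ via the monotonicity of $h$ on $k\geq 2$; once that is in place, part $(2)$ collapses to the quadratic inequality $n^{2}-7n+8\geq 0$ and part $(1)$ to two explicit numerical checks at $k=3$.
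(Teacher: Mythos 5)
Your proof is correct, and at its core it uses the same input as the paper: specialize Theorem \ref{thmA} to $d=n-1$ and do arithmetic on $(n-2)^{k}<\binom{n}{k}$, where $k=n-l-1$. The difference is in how the case analysis over $k$ is handled, and it is a meaningful one. The paper proves part (1) by an unspecified ``direct computation'' for $n=4,5$, and part (2) by citing Corollary \ref{cornonnorm} with threshold $n_{1}=(7+\sqrt{17})/2\approx 5.56$; but the paper's proof of Corollary \ref{cornonnorm} only tests the boundary case $l=n-3$ (codimension $2$), whereas concluding non-normality requires excluding \emph{every} $l\leq n-3$, i.e.\ every $k\geq 2$. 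The justification that $k=2$ is the critical case is never given there. Your monotonicity lemma --- $h(k)=\binom{n}{k}(n-2)^{-k}$ satisfies $h(k+1)/h(k)=\frac{n-k}{(k+1)(n-2)}\leq \frac{1}{3}$ for $k\geq 2$ --- is precisely that missing justification: once the inequality of Theorem \ref{thmA} fails at $k=2$ it fails for all larger $k$, so part (2) reduces to $n(n-1)\leq 2(n-2)^{2}$, and part (1) reduces to the single check $k=3$. Your argument is therefore self-contained, bypasses Corollary \ref{cornonnorm}, and makes rigorous a reduction that the paper's route leaves implicit.
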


\vspace{1\baselineskip}

\begin{proof}[Proof]
Theorem $\ref{thmA}$ gives the following inequality
\begin{equation}
(n-2)^{n-l-1}<\binom{l+1}{n}\nonumber.
\end{equation}
The first assertion is proven by a direct computation of the latter inequality in the special cases $n=4$ and $n=5$. For the second statement, one can apply Corollary $\ref{cornonnorm}$ with $k=1$. Indeed, one has$n_{1}\simeq 5,56$.
\end{proof}

\vspace{1\baselineskip}

We can ask the same question to divisors of degree $n-2$. The following corollary answer to that question.

\vspace{1\baselineskip}

\begin{cor}
Let $X$ be a totally invariant prime divisor of $\p^{n}$ with $n\geq 5$ and let $l$ be the dimension of the singular locus of $X$. Assume that the degree of $X$ is $n-2$ then :
\begin{description}
\item[(1)] the dimension $l$ is $n-2$ or $n-3$ for $n=6,7,8$ ; and
\item[(2)] the divisor $X$ is necessarily non normal when $n\geq 9$.
\end{description}
\end{cor}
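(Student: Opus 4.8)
The plan is to apply Theorem~\ref{thmA} directly and then invoke Corollary~\ref{cornonnorm} for the asymptotic part, exactly as in the degree $n-1$ case. First I would substitute $d=n-2$ into the bound of Theorem~\ref{thmA}, written in the form $(d-1)^{n-l-1}<\binom{n}{l+1}$. With $d=n-2$ this becomes
\begin{equation}
(n-3)^{n-l-1}<\binom{n}{l+1}.\nonumber
\end{equation}
The whole argument rests on the observation that, for fixed small codimension of the singular locus (i.e. $l$ close to $n$), the left-hand side grows much faster in $n$ than the binomial coefficient on the right, so the inequality eventually fails; the only surviving possibilities are those where the exponent $n-l-1$ is very small, namely $l=n-2$ and $l=n-3$.

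For assertion~(1) I would simply evaluate the inequality above by hand in the three cases $n=6,7,8$, testing each admissible value of $l$ from $-1$ up to $n-2$. For each such $n$ one checks that the inequality is violated unless $l\in\{n-2,n-3\}$, which pins down the dimension of the singular locus to those two values. Since these are finitely many explicit numerical checks, this step is routine and requires no clever estimate—just the arithmetic of comparing a power $(n-3)^{\,n-l-1}$ with a single binomial coefficient $\binom{n}{l+1}$.

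For assertion~(2) the natural route is Corollary~\ref{cornonnorm} with $k=2$, since the degree is $n-k=n-2$. That corollary tells us that a normal totally invariant divisor of degree $n-2$ cannot exist once $n\geq n_{2}$, where $n_{2}=2\cdot 2+3/2+\sqrt{2\cdot 4+4+1/4}=11/2+\sqrt{49/4}=11/2+7/2=9$. Thus $n_{2}=9$ exactly, and for $n\geq 9$ the divisor must be non normal. I would therefore state that applying Corollary~\ref{cornonnorm} with $k=2$ gives $n_{2}=9$, which yields the claim directly.

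The main obstacle, such as it is, lies entirely in assertion~(1): one must be careful to check \emph{all} admissible values of $l$ for each of $n=6,7,8$ and confirm that no value other than $n-2$ or $n-3$ can satisfy the inequality, including the boundary cases where the binomial coefficient is largest (around $l+1\approx n/2$). Since $\binom{n}{l+1}$ peaks in the middle while $(n-3)^{n-l-1}$ is largest for small $l$, the competition is genuine only for intermediate $l$, so I would verify those cases explicitly rather than relying on monotonicity. Once the finite computation is dispatched, part~(2) follows immediately from the already-established Corollary~\ref{cornonnorm}, so I expect no real difficulty there.
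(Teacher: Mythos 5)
Your proposal is correct and follows essentially the same route as the paper: part (1) by substituting $d=n-2$ into Theorem \ref{thmA} and checking the inequality $(n-3)^{n-l-1}<\binom{n}{l+1}$ directly for $n=6,7,8$, and part (2) by applying Corollary \ref{cornonnorm} with $k=2$, where $n_{2}=9$. Your explicit computation of $n_{2}=11/2+7/2=9$ matches the paper's claim, so there is nothing to add.
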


\vspace{1\baselineskip}

\begin{proof}[Proof]
Theorem $\ref{thmA}$ gives the following inequality
\begin{equation}
(n-3)^{n-l-1}<\binom{l+1}{n}\nonumber.
\end{equation}
The first assertion is proven by a direct computation of the latter inequality in the special cases $n=6,7,8$. For the second statement, one can apply Corollary $\ref{cornonnorm}$ with $k=2$. Indeed, one has $n_{2}=9$.
\end{proof}

\bibliographystyle{alpha}
\bibliography{biblio}
\addcontentsline{toc}{section}{Bibliographie}

\vspace{1\baselineskip}

\textsc{Yanis Mabed, Université Côte d'Azur, CNRS, LJAD, France}\\
\textit{Email address :} \textbf{yanis.mabed@unice.fr ; ymabed@gmail.com}

\end{document}